\newtheorem{thm}{Theorem}[section]
\newtheorem{cor}[thm]{Corollary}
\newtheorem{lem}[thm]{Lemma}
\newtheorem{prop}[thm]{Proposition}
\newtheorem{rmrk}[thm]{Remark}
\newtheorem{theorem}{Theorem}
\theoremstyle{definition}
\newtheorem{definition}[thm]{Definition}
\newtheorem{example}[thm]{Example}
\numberwithin{equation}{section}
\DeclareMathOperator{\dist}{dist}
\newcommand{\vol}{\operatorname{Vol}}
\newcommand{\R}{\mathbb{R}}
\newcommand{\vare}{\varepsilon}
\newcommand{\diam}{\operatorname{diam}}
\newcommand{\Slice}{\operatorname{Slice}}
\newcommand{\hm}{{\mathcal H}}
\newcommand{\set}{\operatorname{set}}
\newcommand{\intcurr}{{\mathbf I}}      
\newcommand{\SF}{{\mathbf {SF}}}
\newcommand{\fillvol}{{\operatorname{FillVol}}}
\newcommand{\rstr}{\:\mbox{\rule{0.1ex}{1.2ex}\rule{1.1ex}{0.1ex}}\:}
\newcommand{\bdry}{\partial}
\newcommand{\mass}{{\mathbf M}}
\begin{document}
\title[A generalized tetrahedral property]{A generalized tetrahedral property}
\author[J.~N\'u\~nez-Zimbr\'on]{Jes\'us N\'u\~nez-Zimbr\'on$^{\ast}$} 
\author[R.~Perales]{Raquel Perales $^{\ast \ast}$ }
\address[J.~N\'u\~nez-Zimbr\'on]{Department of Mathematics, University of California, Santa Barbara, California, 93106, USA}
\curraddr{Centro de Ciencias Matem\'aticas, UNAM, Campus Morelia, 59089, Morelia, Michoac\'an, M\'exico}
\email{zimbron@matmor.unam.mx}
\address[R.~Perales]{Conacyt Research Fellow. Instituto de Matem\'aticas, Universidad Nacional Aut\'onoma de M\'exico\\ Oaxaca, M\'exico.}
\email{raquel.peralesaguilar@gmail.com}
%
\begin{abstract}
We present examples of metric spaces that are not Riemannian manifolds nor dimensionally homogeneous that satisfy Sormani's Tetrahedral Property. We then note that Euclidean cones over metric spaces with small diameter do not satisfy this property. Therefore, we extend the tetrahedral property to a less restrictive one and prove that this generalized definition retains all the results of the original tetrahedral property proven by Portegies-Sormani: it provides a lower bound on the sliced filling volume and a lower bound on the volumes of balls.  Thus, sequences with uniform bounds on this Generalized Tetrahedral Property also have subsequences which converge in both the Gromov-Hausdorff and Sormani-Wenger intrinsic flat sense to the same noncollapsed and countably rectifiable limit space.
\end{abstract}
\maketitle

\section{Introduction}

\subsection{Historical context}

The $n$-dimensional $(C, \beta)$-tetrahedral property for a metric space was originally defined in \cite{Sor1}. As this property is rather strong an integral version was also introduced. These properties are given in terms of distances between points in metric spheres and provide a lower bound on the volumes of balls. Thus, a Gromov-Hausdorff (Integral) Tetrahedral Compactness Theorem for sequences of $n$-dimensional Riemannian manifolds with a uniform upper bound on volume and diameter that satisfy a uniform (integral) tetrahedral property was deduced. Furthermore, it is shown that the limits are countably $ \mathcal{H}^n$-rectifiable metric spaces \cite{PorSor}, where $\mathcal{H}^n$ denotes the $n$-dimensional Hausdorff measure. 

The proof of the (Integral) Tetrahedral Compactness Theorems is based upon intrinsic flat convergence.
The intrinsic flat distance between integral current spaces, countably $\mathcal{H}^n$-rectifiable metric spaces that are generalizations of oriented manifolds, was introduced by Sormani-Wenger in \cite{SW}. It was defined using Gromov's idea of isometrically embedding two metric spaces into a common metric space. However, rather than measuring the Hausdorff distance between the images, since they are integral currents in the sense of Ambrosio-Kirchheim \cite{AK} one takes the flat distance between them. A compactness theorem with respect to the intrinsic flat distance for the class of $n$-dimensional integral current spaces holds \cite{Wen}. Thus, limits obtained with this distance are automatically either countably $\mathcal{H}^n$-rectifiable or the $n$-dimensional zero integral current space. Moreover, once a sequence of integral current spaces converges in the Gromov-Hausdorff sense to a limit space then a subsequence converges in the intrinsic flat sense to a subset of the aforementioned limit \cite{SW}.  Thus, one can show that Gromov-Hausdorff limits are countably rectifiable by showing both limits agree.  Now, to proceed in this way since the mass of the currents is lower semi continuous with respect to the intrinsic flat distance one has to bound quantities that behave better with respect to this distance.

Sormani-Wenger proved, using a filling volume estimate for spheres, that the two limit spaces agree for noncollapsed sequences of closed, oriented $n$-dimensional manifolds that have nonnegative Ricci curvature and a uniform upper volume bound \cite{SW}.  The second named author proved a statement similar to that of Sormani-Wenger for manifolds with boundary in \cite{Per} and Li and the second named author proved an analogous result for Alexandrov spaces in \cite{LiPer}.  
Matveev-Portegies, using a different approach, extended Sormani-Wenger result allowing any uniform lower bound on the Ricci curvature \cite{MP}.  Portegies-Sormani's  (Integral) Tetrahedral Compactness Theorem shows that both limits agree by introducing a new notion called the sliced filling volume of a ball and the $k$-th sliced filling of balls. These quantities behave well with respect to intrinsic flat distance. Instead of showing strong estimates on the filling volumes of spheres as Sormani-Wenger did, Portegies-Sormani  prove that, at points where the (integral) tetrahedral property holds, the mass measure of balls is bounded below by the $k$-th sliced filling volume. The latter is bounded below by an appropriate power of the radius of the ball times a constant coming from the constants appearing in the tetrahedral property. From this, a Gromov-Hausdorff precompactness result is derived, further obtaining that the intrinsic flat limit is not only contained in the Gromov-Hausdorff limit but in fact both limits are equal \cite{PorSor}. 

We recall that by the work of Cheeger-Colding it was already known that noncollapsed Gromov-Hausdorff limits of $n$-dimensional manifolds that have Ricci curvature bounded below were countably $\mathcal{H}^n$-rectifiable \cite{CheCol}.

In this paper we extend Sormani's tetrahedral property to a less restrictive one and prove that this definition retains all the results of the original property.  As a matter of fact, we also define an integral version as was done by Sormani. Nonetheless, in this case, both integral versions are equivalent up to a transformation of the various constants.

\subsection{Results}

We start by recalling the  \textit{tetrahedral property} which was previously defined in \cite{PorSor} and \cite{Sor1}. Let $(X,d)$ be a metric space. Here, $B_r(p)$ denotes the open metric ball of radius $r$ centered at $p$ and $\bar X$ the metric completion of $X$.  Let $S(p;r)=\{ x \in X | \,d(x,p)=r \}$ and $S(x_1,\dots,x_j;t_1,\dots,t_j) = \bigcap_{i=1}^j   S(x_i;t_i)$. If $A$ is a set we denote its cardinality by $|A|$. 

\begin{definition}\label{def-Tprop}[Sormani]
Let $C>0$ and $\beta\in (0,1)$. A metric space $(X,d)$ has the \textit{$n$-dimensional $(C,\beta)$-tetrahedral property at a point $p$ for radius $r$} if there exist points $p_1,\ldots, p_{n-1}\in \bar{X}$, such that $d(p,p_i)=r$ and for all  $(t_1,\ldots, t_{n-1})\in [(1-\beta)r,(1+\beta)r]^{n-1}$ 
\begin{equation}
h(p,r,t_1,\ldots, t_{n-1})\geq Cr,
\end{equation}
where 
\begin{equation}
h(p,r,t_1,\ldots, t_{n-1})=\left\{
\begin{array}{ll}
      \inf\{ \, d(x,y) \mid x\neq y, x,y\in S \}  & |S| \geq 2 \\
      0 & \text{otherwise} \\
\end{array} 
\right.
\end{equation}
and
 \begin{equation} 
S= S(p,p_1,\dots,p_{n-1};r,t_1,\dots,t_{n-1}).
\end{equation}
We say that $X$ \textit{satisfies the $n$-dimensional $(C,\beta)$-tetrahedral property for radius r} if it satisfies the $n$-dimensional $(C,\beta)$-tetrahedral property at every point for radius $r$. 
\end{definition}
In general, $C,\beta$ and $r$ depend on $p \in X$. Whenever this might cause confusion we will make explicit mention of the dependence. 

The only examples in the literature of metric spaces that satisfy the tetrahedral property are Example 2.1, Example 2.2 and Example 2.3 in \cite{Sor1}. They are:  the three-dimensional Euclidean space, tori of the form $S^1 \times S^1 \times S^1_\varepsilon$ where the radius for which the property is satisfied goes to zero as $\varepsilon$ goes to zero, and two copies of Euclidean space with a large collection of tiny necks between corresponding points. The first two also appear as Example 3.31 and Example 3.32 in  \cite{PorSor}.  In this work, we present Example \ref{ex-planes} and Example \ref{ex-plane&Line} that show that the metric spaces satisfying the tetrahedral property do not need to be Riemannian manifolds. In fact, such spaces do not need to be dimensionally homogeneous.

In Definition \ref{def-Tprop}, each $t_i$ takes values on the interval $[(1-\beta)r,(1+\beta)r]$. In particular, $t_i=r \in [(1-\beta)r,(1+\beta)r]$. This implies that  Euclidean cones over metric spaces $(X,d)$ with $\diam(X) < \pi/3$ do not satisfy the $(C, \beta)$-tetrahedral property,  Example \ref{ex-conesModT}. This example implies that the class of $n$-dimensional Alexandrov spaces with nonnegative curvature do not satisfy a uniform tetrahedral property, Corollary \ref{cor-AlexNoUnif}.  The same is true for the class of $n$-dimensional Riemannian manifolds with no negative sectional curvature.
This answers in the negative the conjecture in Remark 3.35 of \cite{PorSor}.

By allowing $t_i$ to take values on intervals that do not necessarily contain $r$, manifolds with conical singularities and cones which failed to satisfy the tetrahedral property at their tips for radius $r$ satisfy the $(C,\alpha,\beta)$-tetrahedral property, Example \ref{ex-conesModT2}.  
Nonetheless, it is still open whether Alexandrov spaces satisfy the tetrahedral property or the generalized one, Remarks  \ref{rmrk-AlexSpTprop}-\ref{rmrk-AlexSpTprop2}.

\begin{definition}[$(C,\alpha,\beta)$-tetrahedral property]\label{def-modTprop}
Let $C>0$ and $\alpha, \beta\in (0,2)$, $\alpha < \beta$.  A metric space $(X,d)$ satisfies the $n$-dimensional $(C,\alpha,\beta)$-tetrahedral property at a point $p$ for radius $r$ if there exist points $p_1,\ldots, p_{n-1}\in \bar{X}$ such that $d(p,p_i)=r$ and for all $(t_1,\ldots, t_{n-1})\in [\alpha r,\beta r]^{n-1}$ the following holds  
\[ 
h(p,r,t_1,\ldots, t_{n-1})\geq Cr,
\]
where $h$ is given as in Definition \ref{def-Tprop}.

We say that $X$ \textit{satisfies the $n$-dimensional $(C,\alpha,\beta)$-tetrahedral property for radius r} if it satisfies the  $n$-dimensional $(C,\alpha, \beta)$-tetrahedral property at every point for radius $r$. 
\end{definition}

For any $\beta \in (0,1)$, the $(C,\beta)$-tetrahedral property implies the $(C,1-\beta,1+\beta)$-tetrahedral property. The $(C,\alpha,\beta)$-tetrahedral property implies a $(C,\beta')$ property only if $\alpha < 1 < \beta$.   In this paper we show that this new definition retains all the powerful properties of the original tetrahedral property that were proven by Portegies-Sormani in \cite{PorSor}, including a lower bound for the sliced filling volume. Thus, sequences of manifolds with uniform lower bounds on our revised tetrahedral property have subsequences which converge in the Gromov-Hausdorff and intrinsic flat sense to noncollapsed rectifiable limit spaces. See Theorem 3.41  and Theorem 5.2 in \cite{PorSor}, cf. Theorem \ref{thm-gh} and Theorem \ref{compGH=IF}. 

For ease of notation we present in this section results for manifolds. Inside we prove versions for integral current spaces. The next theorem follows from Theorem  \ref{tetra-ball} where we estimate the mass and the $(n-1)$-th sliced filling obtaining,
\begin{equation*}
\mass(S(p,r))\geq \SF_{n-1}(p,r) \geq C (\beta- \alpha)^{n-1}r^n.
\end{equation*}

\begin{theorem}\label{tetra-manifold}
Let $M$ be a compact oriented Riemannian manifold possibly with boundary. Suppose that $p \in M$ and $B_{R}(p)\cap \partial M=\emptyset$. If for almost every $r\in (0,R)$ the $n$-dimensional $(C,\alpha,\beta)$-tetrahedral property at $p$ for radius $r$ holds, then
\begin{equation}
\vol(B_r(p))\geq C (\beta - \alpha)^{n-1}r^n.
\end{equation}
\end{theorem}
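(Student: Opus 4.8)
The plan is to establish the estimate through the iterated slicing of the ball that underlies the integral-current version, Theorem~\ref{tetra-ball}. Since $M$ is compact and oriented and $B_R(p)\cap\bdry M=\emptyset$, for almost every $r\in(0,R)$ the ball $B_r(p)$ lies in the interior of $M$ and, carrying the orientation of $M$, defines an $n$-dimensional integral current $T$ with $\mass(T)=\vol(B_r(p))$ whose boundary $\bdry T$ is, up to orientation, the integration current of the metric sphere $S(p;r)$. I would fix one such $r$ at which the $(C,\alpha,\beta)$-tetrahedral property holds, retain the points $p_1,\dots,p_{n-1}$ it furnishes (which satisfy $p_i\in B_R(p)\subseteq\operatorname{int}(M)$, hence stay away from $\bdry M$), and set $\Phi=(\rho_1,\dots,\rho_{n-1})\colon M\to\R^{\,n-1}$ with $\rho_i=\dist(\cdot,p_i)$. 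Each $\rho_i$ is $1$-Lipschitz.

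Next I would slice $T$ by $\Phi$. Iterating the coarea (slicing) inequality once for each coordinate $\rho_i$ and using $\Lip(\rho_i)\le 1$ gives
\[
\vol(B_r(p))=\mass(T)\ \ge\ \int_{[\alpha r,\beta r]^{\,n-1}}\mass\bigl(\inner{T,\Phi,\mathbf t}\bigr)\,d\mathbf t ,
\]
where each slice $\inner{T,\Phi,\mathbf t}$ is a $1$-dimensional integral current. The slice-of-boundary formula identifies its boundary, $\bdry\inner{T,\Phi,\mathbf t}=\pm\,\inner{\bdry T,\Phi,\mathbf t}$, with a $0$-cycle supported in $S(p;r)\cap\Phi^{-1}(\mathbf t)=S(p,p_1,\dots,p_{n-1};r,t_1,\dots,t_{n-1})=S$. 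Thus $\inner{T,\Phi,\mathbf t}$ is a filling of this $0$-cycle, so $\mass(\inner{T,\Phi,\mathbf t})\ge\fillvol\bigl(\bdry\inner{T,\Phi,\mathbf t}\bigr)$.

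I would then feed in the tetrahedral property. For $\mathbf t\in[\alpha r,\beta r]^{\,n-1}$ the support $S$ has the property that any two of its distinct points are at distance $\ge Cr$, since $h(p,r,\mathbf t)\ge Cr$. A nonzero integral $0$-cycle has a point of positive and a point of negative multiplicity, and any $1$-current filling it must join such a pair of points of $S$, whence its mass---and therefore $\fillvol\bigl(\bdry\inner{T,\Phi,\mathbf t}\bigr)$---is at least $Cr$. Recognizing $\int_{[\alpha r,\beta r]^{\,n-1}}\fillvol\bigl(\bdry\inner{T,\Phi,\mathbf t}\bigr)\,d\mathbf t$ as the sliced filling volume $\SF_{n-1}(p,r)$ then yields
\[
\vol(B_r(p))\ \ge\ \SF_{n-1}(p,r)\ \ge\ Cr\,(\beta-\alpha)^{\,n-1}r^{\,n-1}\ =\ C(\beta-\alpha)^{\,n-1}r^{\,n},
\]
matching the estimate recorded before the statement. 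This holds for almost every admissible $r$; since $r\mapsto\vol(B_r(p))$ is nondecreasing and left-continuous, the bound extends to every $r\in(0,R)$ by monotone convergence.

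The main obstacle is the clause ``when the $0$-cycle is nonzero.'' The tetrahedral property guarantees $|S|\ge 2$ with the separation $Cr$, but one must still ensure that the sliced boundary $\inner{\bdry T,\Phi,\mathbf t}$ is a \emph{nontrivial} $0$-cycle for almost every $\mathbf t\in[\alpha r,\beta r]^{\,n-1}$, so that the lower bound $Cr$ is not vacuous. This is the technical heart of Theorem~\ref{tetra-ball}: it is where one uses that $\bdry T$ is a genuine cycle and that the full slicing map $(\rho_0,\Phi)$, with $\rho_0=\dist(\cdot,p)$, is nondegenerate over $(r,\mathbf t)$ for such $\mathbf t$, forcing the $0$-cycle to be nonzero; combined with the elementary lower bound on the filling volume of separated $0$-cycles, this produces the inequality chain above.
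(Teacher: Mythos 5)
Your proposal reconstructs the mechanism behind the estimate rather than citing it, and it breaks down exactly at the step you yourself flag: you never prove that the sliced boundary $\langle \bdry T,\Phi,\mathbf{t}\rangle$ is a \emph{nonzero} $0$-cycle for almost every $\mathbf{t}\in[\alpha r,\beta r]^{n-1}$. This is a genuine gap, not a deferrable technicality. The tetrahedral hypothesis is purely metric: it says the set $S=S(p,p_1,\dots,p_{n-1};r,t_1,\dots,t_{n-1})$ has at least two points and is $Cr$-separated. Vanishing of a slice, by contrast, is a statement about the current structure; the support of $\langle \bdry T,\Phi,\mathbf{t}\rangle$ is only \emph{contained} in $S$, so $S$ can contain many well-separated points while the slice there is the zero current, in which case $\fillvol=0$ and your chain of inequalities gives nothing. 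Bridging the metric data to the current-theoretic nonvanishing is precisely the analytic content of Portegies--Sormani's Theorem 3.25 (quoted in the paper as Theorem \ref{dist-set-2}), and your proposed mechanism for it --- that the slicing map $(\rho_0,\Phi)$ is ``nondegenerate'' over $(r,\mathbf{t})$ --- is not an argument: nondegeneracy is not a defined notion for Lipschitz slicings of integral currents, and the actual proof in \cite{PorSor} proceeds differently. So your write-up establishes only the easy half of the needed inequality (the slicing/coarea bound, the commutation of slice and boundary, and the elementary lower bound $\fillvol\ge Cr$ for a nonzero $Cr$-separated integral $0$-cycle, all of which are correct), and leaves the hard half open.

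For contrast, the paper never reproves this mechanism. Its proof of Theorem \ref{tetra-manifold} is two lines: apply Theorem \ref{tetra-ball}, whose own proof consists of invoking Theorem \ref{dist-set-2} as a black box, noting that $d(p,p_i)=r$ and $0<\alpha<\beta<2$ give $[\alpha r,\beta r]^{n-1}\subset\prod_i\bigl(d(p,p_i)-r,\,d(p,p_i)+r\bigr)$, and integrating $h\ge Cr$ over the subcube; then $\mass(S(p,r))=\vol(B_r(p))$ for compact oriented Riemannian manifolds by Lemma 3.2 of \cite{PorSor}. Had you cited Theorem \ref{tetra-ball} (equivalently Theorem 3.25 of \cite{PorSor}) at the point where you need nonvanishing of the slices, your proof would close and would essentially coincide with the paper's; your final remark extending the bound from almost every $r$ to all $r$ via monotonicity and left-continuity of $r\mapsto\vol(B_r(p))$ is a correct supplement, paralleling the limiting argument the paper uses in Theorem \ref{mass-ball}.
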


The previous volume estimate implies the following Gromov-Hausdorff convergence result. 

\begin{theorem}\label{tetra-compactness2} 
Let  $r_0>0$, $0< \alpha <  \beta < 2$,  $C>0, V_0>0$ and $M_i$ a sequence of $n$-dimensional closed oriented and connected Riemannian manifolds
that satisfy 
\[
\vol(M_i) \le V_0. 
\]
If all $M_i$ satisfy the $n$-dimensional $(C,\alpha,\beta)$-tetrahedral property
for all radii $r \leq  r_0$. Then a subsequence of the
$M_i$ converges in Gromov-Hausdorff sense to a noncollapsed metric space. In particular, there exists $D_0(C,\alpha,\beta, r_0, V_0) > 0$ such that 
$ \diam(M_i) \leq D_0$.
\end{theorem}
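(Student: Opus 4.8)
The plan is to deduce the statement from the uniform lower bound on volumes of balls furnished by Theorem~\ref{tetra-manifold}, combined with Gromov's compactness theorem. Since each $M_i$ is closed, the condition $B_R(p)\cap\partial M_i=\emptyset$ holds automatically for every $p\in M_i$ and every $R>0$. By hypothesis the $(C,\alpha,\beta)$-tetrahedral property holds at every point for all radii $r\le r_0$, in particular for almost every $r\in(0,R)$ whenever $R\le r_0$; applying Theorem~\ref{tetra-manifold} with each such $R$ gives the uniform bound
\begin{equation*}
\vol(B_r(p))\ge \kappa\,r^n,\qquad \kappa:=C(\beta-\alpha)^{n-1},
\end{equation*}
valid for all $p\in M_i$, all $r\le r_0$, and all $i$.

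First I would prove the diameter bound. Fix $i$ and let $\{q_1,\dots,q_N\}\subset M_i$ be a maximal $r_0$-separated set. The balls $B_{r_0/2}(q_j)$ are pairwise disjoint, so $N\kappa(r_0/2)^n\le\vol(M_i)\le V_0$ and hence $N\le N_0:=V_0/(\kappa(r_0/2)^n)$, a bound independent of $i$. By maximality the balls $B_{r_0}(q_j)$ cover $M_i$, and since $M_i$ is connected the intersection graph of this cover is connected; joining two centers through a chain of at most $N_0-1$ pairwise intersecting balls and adding the two radii at the endpoints yields $\diam(M_i)\le 2N_0 r_0=:D_0$, which depends only on $n,C,\alpha,\beta,r_0,V_0$.

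Next I would check uniform total boundedness. Given $\epsilon>0$, set $\delta=\min(\epsilon,r_0)$ and choose a maximal $\delta$-separated set $\{q_j\}$. The balls $B_{\delta/2}(q_j)$ are disjoint, each of volume at least $\kappa(\delta/2)^n$, so there are at most $V_0/(\kappa(\delta/2)^n)$ of them, independently of $i$; as the balls $B_\delta(q_j)\subseteq B_\epsilon(q_j)$ cover $M_i$, this bounds the $\epsilon$-covering number of $M_i$ uniformly in $i$. Thus the family $\{M_i\}$ has uniformly bounded diameter and is uniformly totally bounded, so Gromov's compactness theorem yields a subsequence $M_{i_k}\GHto X$ with $X$ a compact metric space.

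Finally I would argue that $X$ is noncollapsed. Passing to a further subsequence so that the renormalized volume measures also converge, the uniform estimate $\vol(B_r(p))\ge\kappa r^n$ persists in the limit, producing a limit measure $\mu$ on $X$ with $\mu(B_r(x))\ge\kappa r^n$ for all $x\in X$ and $r\le r_0$; this is precisely the conclusion retained from Theorem~3.41 and Theorem~5.2 of \cite{PorSor}, whose hypotheses we have arranged through Theorem~\ref{tetra-manifold}. In particular $X$ has positive $n$-dimensional Hausdorff measure and is noncollapsed. I expect this last step to be the main obstacle: transporting the ball-volume lower bound from the $M_{i_k}$ to the Gromov-Hausdorff limit is not a formal consequence of the covering estimates but relies on the measured Gromov-Hausdorff (equivalently, intrinsic flat) convergence machinery of Portegies-Sormani, and one must rule out loss of mass to a lower-dimensional set; the preceding packing and connectedness arguments are routine by comparison.
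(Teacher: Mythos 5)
Your packing and diameter arguments are exactly the paper's route: the paper deduces Theorem \ref{tetra-compactness2} from Theorem \ref{tetra-compactness11}, whose proof is precisely your combination of the ball-volume lower bound (there obtained from Theorem \ref{tetra-ball}, the integral-current version of Theorem \ref{tetra-manifold}), the bound on the number of disjoint $\varepsilon$-balls, Gromov's compactness theorem, and a diameter bound coming from connectedness/the length-space property. Up to that point your proposal is correct and matches the paper.

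The gap is in your final step, the noncollapsedness of the limit. Passing the volume measures to a weak limit $\mu$ with $\mu(B_r(x))\ge \kappa r^n$ for $r\le r_0$ does \emph{not} yield ``$X$ has positive $n$-dimensional Hausdorff measure'': a lower bound of exponent $n$ on the measure of balls controls the dimension of $X$ from \emph{above}, not from below. For instance $X=[0,1]$ with Lebesgue measure satisfies $\mu(B_r(x))\ge r\ge \kappa r^n$ for all $r\le r_0$ as soon as $\kappa r_0^{n-1}\le 1$, and $\mathcal H^n(X)=0$; a limit measure satisfying your inequality may also carry atoms. Relatedly, your assertion that the hypotheses of Theorems 3.41 and 5.2 of \cite{PorSor} ``have been arranged through Theorem \ref{tetra-manifold}'' is not correct: those theorems (cf.\ Theorem \ref{SF_k-compactness}) require the sliced-filling lower bound $\SF_{n-1}(p,r)\ge Cr^n$, which is strictly stronger information than, and not a consequence of, the ball-volume bound. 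The repair is to return to the tetrahedral hypothesis itself: Theorem \ref{tetra-ball} gives $\mass(S(p,r))\ge \SF_{n-1}(p,r)\ge C(\beta-\alpha)^{n-1}r^n$, and feeding this sliced-filling estimate into Portegies--Sormani's compactness theorem (Theorem \ref{SF_k-compactness}; in the paper this is carried out as Theorem \ref{tetra-compactness3}) shows that a subsequence converges in both the Gromov--Hausdorff and intrinsic flat senses to the same space. Since that common limit is the underlying space of a nonzero $n$-dimensional integral current space, it is $\mathcal H^n$-rectifiable with positive $\mathcal H^n$-measure, which is the noncollapsedness asserted in the statement.
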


In order to state the following result, Theorem \ref{tetra-compactness4},  we recall that the intrinsic flat distance between two $n$-dimensional integral current spaces $(X_i,d_i,T_i)$ is an analogue
 of the Gromov-Hausdorff distance between metric spaces. That is, it is the infimum of the flat distances of isometric orientation-preserving images of $(X_i,d_i,T_i)$, \cite{SW}. Here, an $n$-dimensional integral current space $(X,d,T)$ consists of an  $\mathcal{H}^n$-countably rectifiable metric space $(X,d)$ and an $n$-dimensional integral current structure $T$ on $\bar{X}$ as defined by Ambrosio-Kirchheim in \cite{AK} such that $\set(T)=X$. These integral currents are the generalization to metric spaces of the currents studied by Federer \cite{F} and Federer-Fleming \cite{FF}. Neither the Gromov-Hausdorff, nor the intrinsic flat convergence imply the other one, but when a sequence converges with both distances, the intrinsic flat limit is either the zero integral current space or it is contained in the Gromov-Hausdorff limit space \cite{SW}.  We note that the class of $n$-dimensional precompact integral current spaces with a uniform lower bound on their mass and the mass of their boundaries and, an upper bound on their diameter is compact under the intrinsic flat distance \cite{Wen}. 

\begin{theorem}\label{tetra-compactness4}
Let $r_0>0$, $0< \alpha <  \beta < 2$,  $C, V_0>0$ and $M_i$
be a sequence of $n$-dimensional closed oriented Riemannian manifolds that satisfy 
\[
\vol(M_i) \le V_0
\]
and the $n$-dimensional $(C,\alpha,\beta)$-tetrahedral
property for all radii $r \leq  r_0$. Then $M_i$ has a Gromov-Hausdorff and intrinsic flat convergent subsequence
whose limits agree. 
In particular, the limit is $\mathcal{H}^n$-rectifiable. 
\end{theorem}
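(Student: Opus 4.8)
The plan is to obtain both convergences from the hypotheses and then invoke the compatibility result of Sormani–Wenger that identifies the two limits. First I would apply Theorem \ref{tetra-compactness2}: since each $M_i$ satisfies the $(C,\alpha,\beta)$-tetrahedral property for all radii $r\le r_0$ and has volume bounded by $V_0$, there is a uniform diameter bound $\diam(M_i)\le D_0(C,\alpha,\beta,r_0,V_0)$, and a subsequence (still denoted $M_i$) converges in the Gromov–Hausdorff sense to a noncollapsed metric space $X$. I would keep this subsequence fixed throughout.

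Next I would set up the integral current space structure. Each compact oriented Riemannian manifold $M_i$ defines canonically an $n$-dimensional integral current space $(M_i,d_i,T_i)$ with $\mass(T_i)=\vol(M_i)\le V_0$ and $\mass(\bdry T_i)=0$ (the manifolds are closed). To apply Wenger's compactness theorem \cite{Wen} for the intrinsic flat distance I need three uniform bounds: an upper bound on diameter, an upper bound on mass, and an upper bound on the mass of the boundary. The diameter bound comes from the previous step, the mass bound is $V_0$, and the boundary mass is zero. Hence a further subsequence converges in the intrinsic flat sense to some integral current space. The key point needed to prevent the intrinsic flat limit from degenerating to the zero current is the lower volume bound on balls: by Theorem \ref{tetra-manifold} (equivalently Theorem \ref{tetra-ball}), for each $r\le r_0$ we have $\vol(B_r(p))\ge C(\beta-\alpha)^{n-1}r^n$ uniformly, which is exactly a noncollapsing estimate; this lower bound on the mass of balls survives under intrinsic flat convergence and forces the limit current to be nonzero.

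The main obstacle is showing that the two limits \emph{agree} rather than merely coexist. By the Sormani–Wenger theorem \cite{SW}, when a sequence converges in both senses, the intrinsic flat limit is either the zero integral current space or is contained (isometrically) in the Gromov–Hausdorff limit. I would first rule out the zero case using the uniform lower volume bound of the previous paragraph together with the noncollapsing of the Gromov–Hausdorff limit $X$, so the intrinsic flat limit is a nonzero current space contained in $X$. To upgrade ``contained in'' to ``equal to $X$'' I would argue that the lower mass bound on balls, passed to the limit, shows that every point of $X$ has positive lower density for the limiting current's measure; since the current space is supported on the full Gromov–Hausdorff limit $X$, the settled completion of the intrinsic flat limit coincides with $X$ as a metric space. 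This is precisely the mechanism used in Portegies–Sormani (cf.\ Theorem 5.2 in \cite{PorSor}); since Theorem \ref{tetra-ball} provides the identical uniform ball-volume lower bound in the $(C,\alpha,\beta)$ setting, their argument transfers verbatim with $\beta-\alpha$ playing the role of the constant.

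Finally, once the two limits are identified as the same space $(X,d,T)$, the conclusion that $X$ is $\mathcal H^n$ rectifiable is immediate: $X$ carries an $n$-dimensional integral current structure $T$, and by definition (Ambrosio–Kirchheim \cite{AK}) the underlying space of any $n$-dimensional integral current space is $\mathcal H^n$ countably rectifiable. Thus the final statement follows by chaining Theorem \ref{tetra-compactness2}, Wenger's compactness \cite{Wen}, the lower ball-volume bound of Theorem \ref{tetra-ball}, and the identification theorem of \cite{SW}.
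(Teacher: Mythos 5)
Your first two steps (Gromov--Hausdorff subconvergence via Theorem \ref{tetra-compactness2}, intrinsic flat subconvergence via Wenger's compactness theorem applied with $\mass(T_i)\le V_0$, $\mass(\bdry T_i)=0$ and the diameter bound) are fine, but the crucial third step --- identifying the two limits --- rests on a false principle. You claim that the uniform lower bound $\vol(B_r(p))\ge C(\beta-\alpha)^{n-1}r^n$ ``survives under intrinsic flat convergence and forces the limit current to be nonzero.'' It does not: mass is only lower semicontinuous under flat convergence, so ball-mass lower bounds on the $M_i$ give upper-bound information on the limit's mass, never lower bounds. The standard counterexample is exactly the cancellation example of Sormani--Wenger \cite{SW10} --- two oppositely oriented sheets joined by many tiny necks, essentially Example 2.3 of \cite{Sor1} quoted in this paper: every ball of radius $r$ in those spaces has volume at least $c\,r^n$, the Gromov--Hausdorff limit is a single sheet, yet the intrinsic flat limit is the zero current because the two sheets cancel. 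So a uniform ball-volume lower bound can neither rule out the zero limit nor upgrade ``contained in the GH limit'' to equality. You also misattribute the mechanism of Portegies--Sormani: Theorem 5.1/5.2 of \cite{PorSor} is not driven by ball-volume bounds but by a lower bound on the \emph{sliced filling volume}, which is precisely the quantity designed to exclude cancellation.

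The paper's route uses the stronger, intermediate conclusion of Theorem \ref{tetra-ball}: not merely $\mass(S(p,r))\ge C(\beta-\alpha)^{n-1}r^n$ but $\SF_{n-1}(p,r)\ge C(\beta-\alpha)^{n-1}r^n$. Feeding this sliced-filling bound, together with $\mass(T_i)\le V_0$, $\bdry T_i=0$, and the diameter bound of Theorem \ref{tetra-compactness11}, into Theorem \ref{SF_k-compactness} (Portegies--Sormani's Theorem 5.1) yields in one stroke a subsequence converging in both the Gromov--Hausdorff and intrinsic flat senses to the same $\mathcal H^n$ countably rectifiable limit; this is Theorem \ref{tetra-compactness3}, of which Theorem \ref{tetra-compactness4} is the manifold case. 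To repair your argument you would have to replace the ball-volume step by the sliced-filling bound --- i.e., invoke the part of Theorem \ref{tetra-ball} your proposal discards --- at which point you are reproducing the paper's proof.
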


\subsection{Comments on integral tetrahedral property versions}

In addition to the tetrahedral property, Sormani also defined the notion of $n$-dimensional $(C,\beta)$-integral tetrahedral property \cite{Sor1}. Portegies-Sormani proved all their $n$-dimensional $(C,\beta)$-tetrahedral property results using this weaker notion \cite{PorSor}. 
 
We define in  an analogous manner the $n$-dimensional $(C,\alpha, \beta)$-integral tetrahedral property and prove our results using this notion. It is easily seen that our integral property implies the previous integral property.  Nonetheless, both integral versions are equivalent up to a transformation of the various constants, see Proposition \ref{prop-EquivIntT}.

\subsection{Organization of the paper and acknowledgments}

The paper is organized as follows. In Section \ref{sec-Back} we give new examples of metric spaces satisfying the tetrahedral property. We also briefly state the basic results concerning the tetrahedral property, integral tetrahedral property, integral current spaces, intrinsic flat distance, sliced filling volume and $k$-th sliced filling.  For a thorough treatment we suggest \cite{AK}, \cite{PorSor} and \cite{Sor1}. 
Since Gromov-Hausdorff convergence is well known we just refer the reader to \cite{BurBurIva}. 

In Section \ref{sec-Cones} we analyze the tetrahedral property in Euclidean cones  $K(X)$ over metric spaces $X$ and draw some conclusions about Alexandrov spaces. First we see that if $\diam (X) \leq \pi/3$ then $K(X)$ cannot satisfy the $n$-dimensional $(C,\beta)$-tetrahedral property at its vertex, for any $n \geq 2$, $C$ and $\beta$. 
We conclude that Alexandrov spaces with nonnegative curvature do not satisfy a uniform tetrahedral property, Corollary \ref{cor-AlexNoUnif}. We show that the cone over the $2$-dimensional projective space satisfies the $3$-dimensional $(C,\beta)$-tetrahedral property.  In Lemma \ref{lem-slice-cone-tetra} we show that the slices of cones satisfying the $n$-dimensional $(C,\beta)$-tetrahedral property also satisfy the $n$-dimensional $(C,\beta)$-tetrahedral property.   We finish the section listing some of the difficulties that arise when trying to prove that Alexandrov spaces satisfy some tetrahedral property, Remarks  \ref{rmrk-AlexSpTprop}-\ref{rmrk-AlexSpTprop2}.

In Section \ref{sec-modif} we study the $(C,\alpha,\beta)$-tetrahedral property. We see that the $(C,\beta)$-tetrahedral property implies the $(C,1-\beta,1+\beta)$-tetrahedral property, Remark \ref{rmrk-modTimpT}.  The converse does not hold unless $\alpha < 1 < \beta$ as can be seen in  Example \ref{ex-planes2}, Example \ref{ex-plane&Line2} and Example \ref{ex-conesModT2}. 
We define the corresponding $(C,\alpha,\beta)$-integral tetrahedral property and show that it is equivalent to Portegies-Sormani's integral tetrahedral property up to transformation of $C, \alpha$ and $\beta$.  In this section we also prove a mass measure estimate and convergence results for integral current spaces satisfying the $(C,\alpha, \beta)$-integral tetrahedral property;  Theorem \ref{mass-ball},   Theorem \ref{tetra-compactness11} and Theorem \ref{tetra-compactness3}. From them we deduce Theorems \ref{tetra-manifold}-\ref{tetra-compactness4}.  These theorems 
are the analogues of  Theorem 3.39, Theorem 3.41 and Theorem  5.2  in \cite{PorSor} proven for the $(C,\beta)$-integral tetrahedral property. We do not claim any originality in the proofs presented in this section, they follow immediately from \cite{PorSor}.  We include them for completeness, carefully stating the hypotheses involved and noticing that their tetrahedral convergence theorems stated for manifolds, Theorem 3.41 and Theorem 5.2, also hold for integral current spaces. \\


The authors are indebted to C.~Sormani and J.~Portegies for very useful conversations and comments and to the anonymous referee for a very careful reading of the manuscript as well as several comments and suggestions which greatly improved its quality. They also wish to thank C. Sormani for the realization of Figure \ref{fig-r-big} and Figure \ref{fig-p-xy}.   This material is based in part upon work supported by the National Science Foundation under Grant No. DMS-1440140 while the second named author was in residence at the Mathematical Sciences Research Institute in Berkeley, California, during the Spring 2016 semester. The second named author also received partial support funded by C. Sormani's NSF Grant DMS - 1309360.
The first named author was supported by a UCMEXUS-CONACYT postdoctoral grant ``Alexandrov geometry''.

\section{Background}\label{sec-Back}

In this section we briefly recall the results that we use in subsequent sections. In Subsection \ref{ssec-Tprop} we give examples of spaces satisfying the tetrahedral property, state the integral tetrahedral property and go over 
results concerning the (integral) tetrahedral property for Riemannian manifolds proven by Portegies-Sormani \cite{PorSor}  and Sormani \cite{Sor1}. In Subsection \ref{ssec-currents} we provide a short introduction to integral current spaces and intrinsic flat distance following  \cite{AK} and \cite{SW}. We also state the definition of sliced filling volume and the $k$-th sliced filling. For the latter notion we state a theorem where both the intrinsic flat and Gromov-Hausdorff limits agree proved and used by Portegies-Sormani to obtain their (integral) tetrahedral convergence results \cite{PorSor}.

\subsection{Tetrahedral Property}\label{ssec-Tprop}

\subsubsection{Examples and Integral Tetrahedral Property}\label{sssec-ITprop}

To develop some intuition about the tetrahedral property we first provide two examples.  There we see that the metric spaces satisfying the tetrahedral property do not need to be Riemannian manifolds. In fact, such spaces do not need to be dimensionally homogeneous.  Given $\beta \in (0,1)$ there is $C(\beta)$ such that $(\R^2, ||\cdot||)$ satisfies the $2$-dimensional $(C(\beta), \beta)$-tetrahedral property for any radius $r>0$.  Set $C_{\R^2}(\beta)=C(\beta)$, cf. Example 2.1 in \cite{Sor1}. 


\begin{figure}
\centering
\def\svgwidth{0.75\columnwidth} 
\begingroup%
  \makeatletter%
  \providecommand\color[2][]{%
    \errmessage{(Inkscape) Color is used for the text in Inkscape, but the package 'color.sty' is not loaded}%
    \renewcommand\color[2][]{}%
  }%
  \providecommand\transparent[1]{%
    \errmessage{(Inkscape) Transparency is used (non-zero) for the text in Inkscape, but the package 'transparent.sty' is not loaded}%
    \renewcommand\transparent[1]{}%
  }%
  \providecommand\rotatebox[2]{#2}%
  \newcommand*\fsize{\dimexpr\f@size pt\relax}%
  \newcommand*\lineheight[1]{\fontsize{\fsize}{#1\fsize}\selectfont}%
  \ifx\svgwidth\undefined%
    \setlength{\unitlength}{729bp}%
    \ifx\svgscale\undefined%
      \relax%
    \else%
      \setlength{\unitlength}{\unitlength * \real{\svgscale}}%
    \fi%
  \else%
    \setlength{\unitlength}{\svgwidth}%
  \fi%
  \global\let\svgwidth\undefined%
  \global\let\svgscale\undefined%
  \makeatother%
  \begin{picture}(1,0.67283951)%
    \lineheight{1}%
    \setlength\tabcolsep{0pt}%
    \put(0,0){\includegraphics[width=\unitlength,page=1]{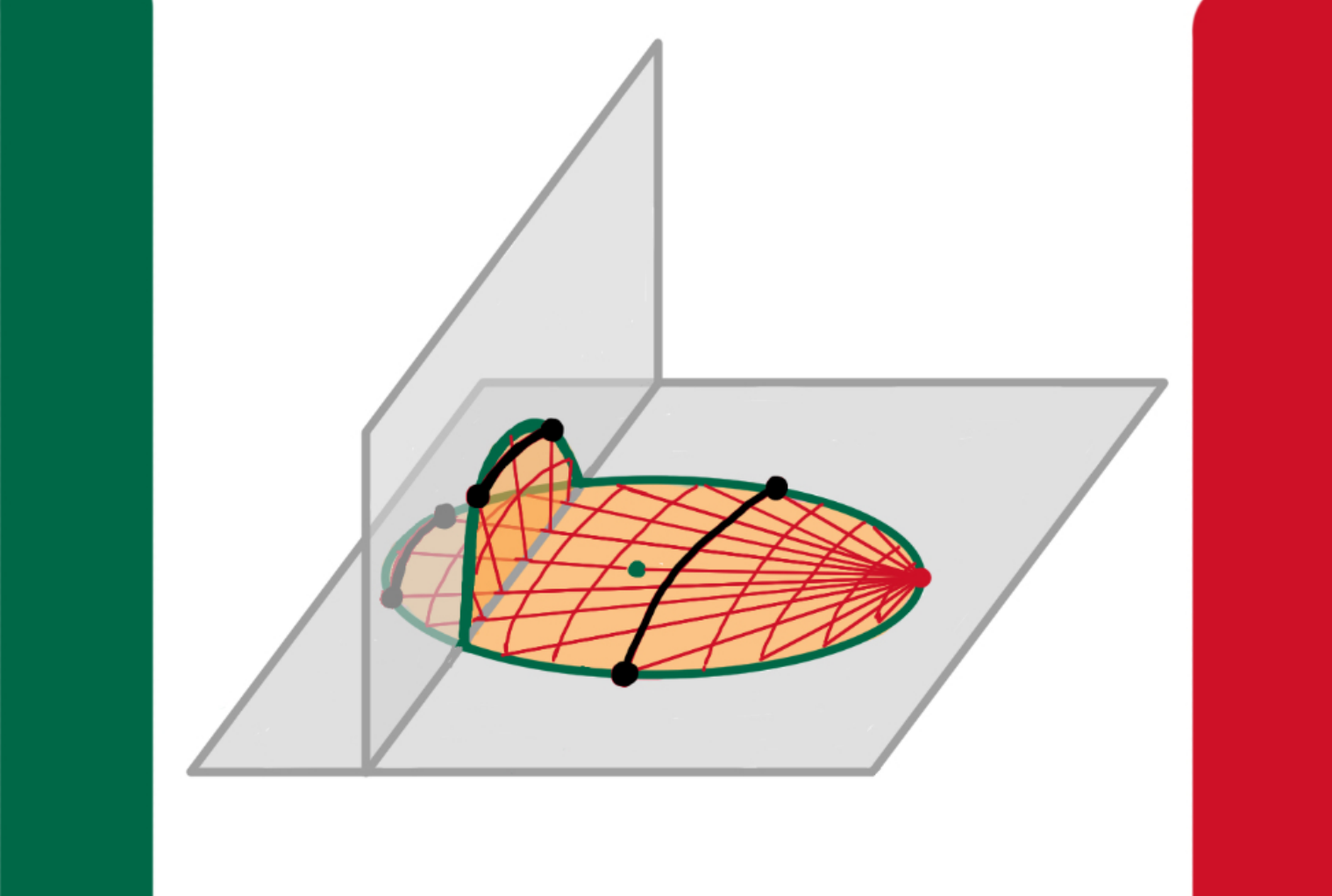}}%
    \put(0.46093637,0.27048354){\color[rgb]{0,0,0}\makebox(0,0)[lt]{\lineheight{1.25}\smash{\begin{tabular}[t]{l}$p$ \end{tabular}}}}%
    \put(0.71217202,0.24211821){\color[rgb]{0,0,0}\makebox(0,0)[lt]{\lineheight{1.25}\smash{\begin{tabular}[t]{l}$p_1$\end{tabular}}}}%
    \put(0.59465856,0.323162){\color[rgb]{0,0,0}\makebox(0,0)[lt]{\lineheight{1.25}\smash{\begin{tabular}[t]{l}$x_1$\end{tabular}}}}%
    \put(0.46600163,0.13372219){\color[rgb]{0,0,0}\makebox(0,0)[lt]{\lineheight{1.25}\smash{\begin{tabular}[t]{l}$y_1$\end{tabular}}}}%
    \put(0.42649276,0.36773606){\color[rgb]{0,0,0}\makebox(0,0)[lt]{\lineheight{1.25}\smash{\begin{tabular}[t]{l}$x_2$\end{tabular}}}}%
    \put(0.31404456,0.29884885){\color[rgb]{0,0,0}\makebox(0,0)[lt]{\lineheight{1.25}\smash{\begin{tabular}[t]{l}$y_2$\end{tabular}}}}%
    \put(0,0){\includegraphics[width=\unitlength,page=2]{two_planes.pdf}}%
    \put(0.54400622,0.49132775){\color[rgb]{0,0,0}\makebox(0,0)[lt]{\lineheight{1.25}\smash{\begin{tabular}[t]{l}$S(p_1,t_1)$\end{tabular}}}}%
  \end{picture}%
\endgroup%

\caption{Here we see the metric space $X$ described in Example \ref{ex-planes}. For $p$ in the $xy$-plane and $ r > \dist(p,y-\text{axis})$ the set $S(p,p_1;r,t_1)$ has different cardinality depending on the value of $t_1$. 
For $t_1  \in (0, \sqrt{2r^2+2r|x|})$, it  consists of two points, $x_1$ and $y_1$. For $t_1\in (\sqrt{2r^2+2r|x|},2r)$, it consists of four points, two of them belong to the $xy$-plane,  $x_2$ and $y_2$, and the other two to the upper part of the $yz$-plane,  $x_3$ and $y_3$. 
}
\label{fig-r-big}
\end{figure} 


\begin{example}\label{ex-planes}
Let $(\R^3, ||\cdot||)$ be the Euclidean metric space with the standard distance. Let $X\subset \R^3$ be the union of the $xy$-plane and the upper part of the $yz$-plane with the induced intrinsic distance, $d$. Then $(X,d)$ satisfies the $2$-dimensional $(C_{\R^2}(\beta),\beta)$-tetrahedral property for radius $r>0$ and any $\beta \in (0, \sqrt{2} -1)$.  

To prove the claim it is enough to consider points on the $xy$-plane.  Let $p=(x,y,0)$. If $x \neq 0$,  take $p_1$ to be the point on the line that passes through $(0,y,0)$ and $p$ that satisfies $||p-p_1||=r  \leq ||p_1||$.  Explicitly, $p_1=(x+rx/|x|,y,0)$.  If $x=0$, take $p_1=(r,y,0)$.  

Now the proof of the claim is divided in two cases depending on whether $r\leq \dist(p,y-\text{axis})$ or $r> \dist(p,y-\text{axis})$. 
See Figure \ref{fig-r-big} for a depiction of the second case. 
If $ r \leq \dist(p,y-\text{axis})=|x| \neq 0$,  $S(p;r)$ equals the circle of radius $r$ around $p$ in the $xy$-plane.  When $t_1 \in (0,2r)$, $S(p_1;t_1)$ equals the circle of radius $t_1$ around $p_1$ in the $xy$-plane and intersects $S(p;r)$ in exactly two points.  Hence, $(X,d)$ satisfies the $2$-dimensional $(C_{\R^2}(\beta),\beta)$-tetrahedral property at $p$ for  radius $r \leq |x|$ for any $0 < \beta < 1$.

If  $ r > \dist(p,y-\text{axis})=|x|$ then $S(p;r)$ equals the circle of radius $r$ around $p$ in the $xy$-plane union a piece of a circle in the $yz$-plane.   Hence, in the $xy$-plane $S(p_1;t_1)$ intersects $S(p;r)$ in exactly two points for any $t_1 \in (0,2r)$. In the $yz$-plane, $S(p_1;t_1)$ does not intersect $S(p;r)$ for  $t_1 < \sqrt{2r^2 + 2r|x|}$ but for $t_1 \in [ \sqrt{2r^2 + 2r|x|}, 2r)$, they intersect in exactly two points. Thus, if $t_1=\sqrt{2r^2 + 2r|x|}$  then $S(p,p_1;r,t_1)$ consists of two points and of four points if $t_1 \in (\sqrt{2r^2 + 2r|x|}, 2r)$. See Figure  \ref{fig-r-big}.  In the latter case, 
\[
\lim_{t_1^+ \to \sqrt{2r^2 + 2r|x|}} \min\{ d(x,y) | x \neq y,\,\,x,y\in S(p,p_1;r,t_1) \}=0.
\]
We see In Figure  \ref{fig-r-big} that when $t_1^+ \to \sqrt{2r^2 + 2r|x|}$ then $d(y_2,y_3) \to 0$.  
Thus, for $p$ such that $r > \dist(p,y-\text{axis})=|x|$ we restrict ourselves to $t_1 \in (0, \sqrt{2r^2 + 2r|x|})$. This implies 
that $S(p,p_1;r,t_1)$ consists only of two points and that once we choose $\beta$ appropriately then we would be able to take $C=C_{\R^2}(\beta)$.  Since $t_1$ should be bounded above by $(1+\beta)r$ we require $0  < \beta < \sqrt{2r^2 + 2r|x|} /r - 1$.
Note that $\sqrt{2}-1 \leq \sqrt{2r^2 + 2r|x|} /r - 1$.  Hence, $(X,d)$ satisfies the $2$-dimensional $(C_{\R^2}(\beta), \beta)$-tetrahedral property at $p$ for radius $r > 0$ for $0  < \beta < \sqrt{2} - 1$.
\end{example}

\begin{example}\label{ex-plane&Line}
Let $(\R^3, ||\cdot||)$ be the Euclidean metric space with the standard distance.  Let $X\subset \R^3$ be the union of the $xy$-plane and the nonnegative part of the $z$-axis with the induced intrinsic metric, $d$. Then $(X,d)$ satisfies the $2$-dimensional $(C_{\R^2}(\beta),\beta)$-tetrahedral property at $p$ in the $xy$-plane for all $r>0$ and at $p$ in the positive part of the $z$-axis only for $r > 2||p||$.

Let $p$ be contained in the $xy$-plane and $r > 0$. If $p \neq 0$, take $p_1$ to be the point in the line that passes through $0$ and $p$, and that satisfies $||p-p_1|| = r \leq ||p_1||$.  Explicitly, $p_1= p + r/||p||\,p$. If $p=0$, take any point $p_1$ in the $xy$-plane that satisfies $r=||p-p_1|| $.  If $||p||\leq r$ then  $S(p;r)$ equals the circle of radius $r$ around $p$ in the $xy$-plane union the point $z=(0,0, r-||p||)$ in the nonnegative part of the $z$-axis.  See Figure \ref{fig-p-xy}.  Hence, in the $xy$-plane $S(p_1;t_1)$ intersects $S(p;r)$ in exactly two points when $t_1 \in (0,2r)$. Now, $z \notin S(p_1;t_1)$  for $t_1 \in (0,2r)$ since $d(z, p_1)=||z||+||p_1||=r-||p|| + ||p|| +r=2r$.  If $||p|| > r$ then  $S(p;r)$ equals the circle of radius $r$ around $p$ in the $xy$-plane and does not intersect the $z$-axis. Hence, $S(p_1;t_1)$ intersects $S(p;r)$ in exactly two points when $t_1 \in (0,2r)$. Thus, the $(C_{\R^2}(\beta),\beta)$-tetrahedral property is satisfied at $p$ for $r> 0$ for any $\beta \in (0,1)$.

If $p$ is on the positive part of the $z$-axis and  $r \leq ||p||$ then  $S(p;r)$ contains only two points. Then for $p_1 \in S(p;r)$ the cardinality of $S(p,p_1;r,t)$ is less than or equal to 1. Hence,  $(X,d)$ cannot satisfy the $2$-dimensional tetrahedral property at those points with that $r$.  Suppose that $r > ||p||$ and pick $p_1$ in the $xy$-plane such that $||p_1||=r-||p||$. Then,  $S(p;r)$ equals the circle of radius $r-||p||$ around $0$ in the $xy$-plane union the point $z=p+r(0,0,1)$ on the $z$-axis. Hence, in the $xy$-plane $S(p_1;t_1)$ intersects $S(p;r)$ in exactly two points only when $t_1 \in (0,2(r-||p||))$. For $t_1 \in (0,2(r-||p||))$, $z \notin S(p_1;t_1)$ since $d(z,p_1)=||z||+ ||p_1||=2r > 2(r-||p||)$.  By definition, $t_1$ has to be contained in an interval of the form $[(1-\beta)r, (1+\beta)r]$. This means that the following inequality $(1+\beta) r < 2(r-||p||)$ must hold. Solving for $\beta$, get $\beta < 1-2||p||/r$. Moreover, $\beta$ must lie on  the interval $(0,1)$. Thus, $r > 2||p||$. 
Hence, the $(C_{\R^2}(\beta),\beta)$-tetrahedral property is satisfied at $p$ in the positive part of the $z$-axis only for $r > 2||p||$ with $\beta \in (0,1-2||p||/r)$. 
\end{example}


\begin{figure}
\centering
\def\svgwidth{0.75\columnwidth} 
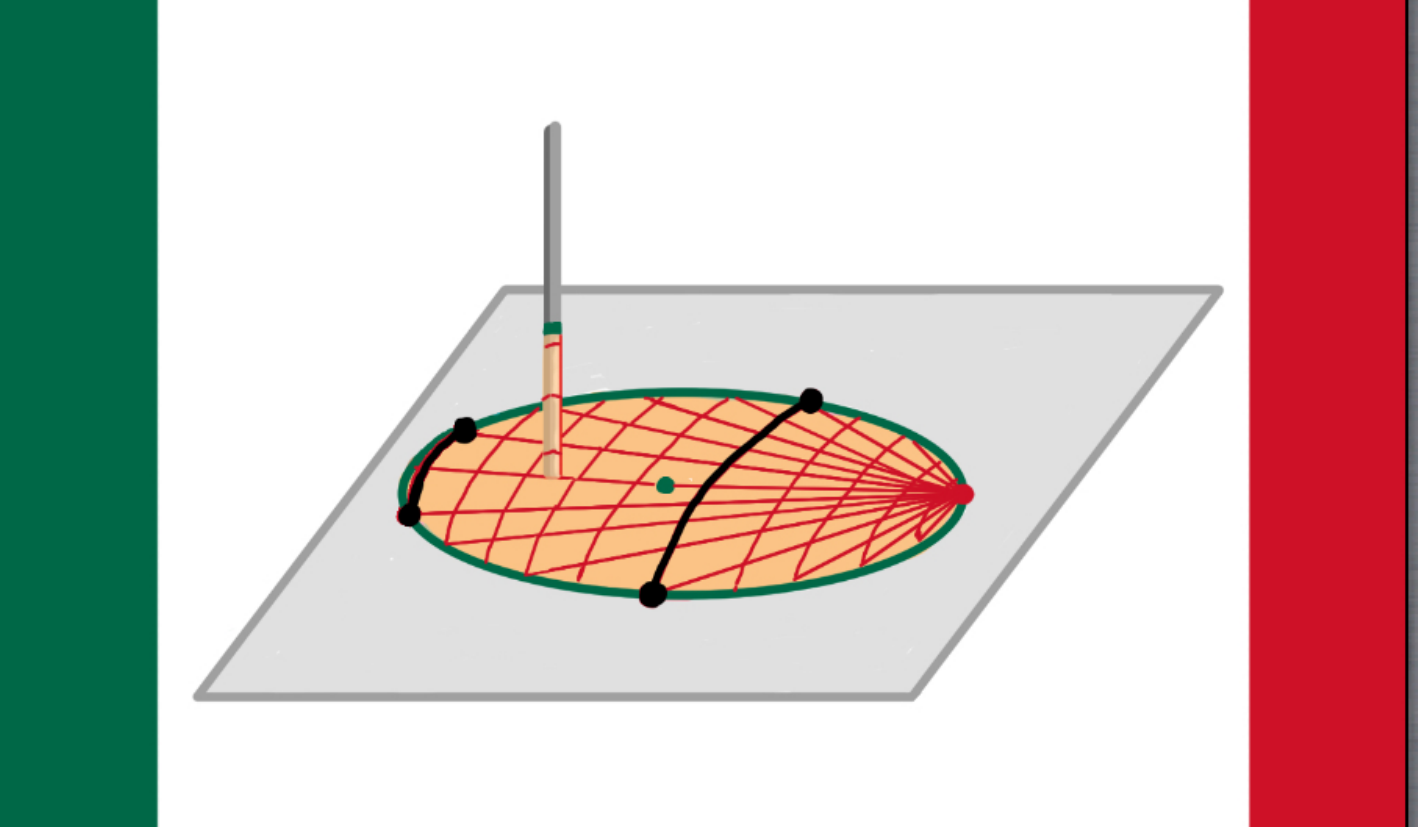
\caption{Here we see the metric space $X$ described in Example \ref{ex-plane&Line}. For $p$ in the $xy$-plane and $r \leq ||p||$ the set $S(p;r)$ consists of a circle of radius $r$ in the $xy$-plane and a point in the nonnegative part of the $z$-axis.
For $t_1  \in (0, 2(r  - ||p||))$, the set $S(p,p_1;r,t_1)$  consists of two points such as $x_1$ and $y_1$ that belong to the $xy$-plane. 
}
\label{fig-p-xy}
\end{figure} 


Now we state the integral tetrahedral property. 

\begin{definition}[Sormani]\label{def-integralCb}
Given $C > 0$ and $\beta \in (0,1)$, a metric space $(X,d)$ is said to have the $n$-dimensional  $(C,\beta)$-integral tetrahedral property at a point $p \in X$ for radius $r$ if there exist points $p_1,\dots,p_{n-1} \in \bar X$, with $d(p,p_i)=r$, such that 
\begin{equation}
\int_{(1-\beta)r}^{(1+\beta)r} \cdots \int_{(1-\beta)r}^{(1+\beta)r} h(p,r,t_1,\dots,t_{n-1}) dt_1\cdots dt_{n-1} \geq C(2\beta)^{n-1}r^n.
\end{equation}

We say that $X$ \textit{satisfies the $n$-dimensional $(C,\beta)$-integral tetrahedral property for radius r} if it satisfies the $n$-dimensional $(C,\beta)$-integral tetrahedral property at every point for radius $r$. 
\end{definition}

It is easy to see that the tetrahedral property implies the integral tetrahedral property. 

\begin{prop}[Portegies--Sormani]
Let $(X,d)$ be a metric space that satisfies the $n$-dimensional $(C,\beta)$-tetrahedral property at $p$ for radius $r$, then it satisfies the $n$-dimensional $(C,\beta)$-integral tetrahedral property at $p$ for radius $r$. 
\end{prop}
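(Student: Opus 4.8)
The plan is to witness the integral tetrahedral property using \emph{exactly the same} points $p_1,\dots,p_{n-1}$ that witness the $(C,\beta)$-tetrahedral property, and then to convert the pointwise lower bound on $h$ into the desired integral lower bound by integrating a constant over a cube. Nothing about the structure of $X$ beyond the hypothesis is needed.

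First I would invoke the hypothesis directly. Since $(X,d)$ satisfies the $n$-dimensional $(C,\beta)$-tetrahedral property at $p$ for radius $r$, there exist $p_1,\dots,p_{n-1}\in\bar X$ with $d(p,p_i)=r$ such that $h(p,r,t_1,\dots,t_{n-1})\geq Cr$ for every $(t_1,\dots,t_{n-1})$ in the cube $Q=[(1-\beta)r,(1+\beta)r]^{n-1}$. I will use these same points to verify Definition \ref{def-integralCb}, so it remains only to produce the integral estimate.

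Next I would compute the Lebesgue measure of the domain of integration. Each of the $n-1$ intervals $[(1-\beta)r,(1+\beta)r]$ has length $(1+\beta)r-(1-\beta)r=2\beta r$, so the cube $Q$ has $(n-1)$-dimensional volume $(2\beta r)^{n-1}=(2\beta)^{n-1}r^{n-1}$. Integrating the pointwise bound $h\geq Cr$ over $Q$ then gives
\[
\int_{(1-\beta)r}^{(1+\beta)r}\!\!\cdots\!\int_{(1-\beta)r}^{(1+\beta)r} h(p,r,t_1,\dots,t_{n-1})\,dt_1\cdots dt_{n-1}\;\geq\; Cr\cdot(2\beta)^{n-1}r^{n-1}=C(2\beta)^{n-1}r^n,
\]
which is precisely the inequality required by the integral tetrahedral property.

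The only point needing care is that the integral be well defined, i.e. that $t\mapsto h(p,r,t_1,\dots,t_{n-1})$ is measurable, since $h$ is built from an infimum of distances over a set $S$ that varies with the $t_i$. The hard part, such as it is, is therefore this measurability bookkeeping rather than the estimate itself. I expect it to be dispatched by the observation that $h$ is a nonnegative function bounded below by the positive constant $Cr$ on all of $Q$, so that the lower Lebesgue integral already satisfies the displayed bound irrespective of measurability; alternatively one verifies directly that $h$ is Borel measurable. Under either reading the constant-times-volume computation yields the claim.
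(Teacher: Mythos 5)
Your proof is correct and matches the paper's approach exactly: the paper (in its proof of the analogous proposition for the $(C,\alpha,\beta)$-version) likewise keeps the same witness points and integrates the constant lower bound $Cr$ over the cube, obtaining constant times volume. Your extra remark on measurability is a harmless refinement that the paper does not even bother to address.
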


\begin{rmrk}
In fact, the following holds: Let $(X,d)$ be a metric space that satisfies the $n$-dimensional $(C,\beta)$-tetrahedral property at $p$ for radius $r$, then it satisfies the $n$-dimensional $(C',\beta')$-integral tetrahedral property at $p$ for radius $r$ for any $\beta' \in (0,1)$ with $C'=C$ if $\beta' \leq \beta$ and otherwise $C'= C(\tfrac{2\beta}{\beta'})^{n-1}$.
\end{rmrk}

\begin{example}\label{ex-integralCb}
This example is very similar to Example \ref{ex-plane&Line}, albeit slightly modified to show a space that satisfies the $2$-dimensional $(C',\beta')$-integral tetrahedral property at a point $p$ for some radius $R$ but does not satisfy the  $(C,\beta)$-tetrahedral property at the same point $p$ for the same radius $R$.    

Let $r>0$, $I= \{ (0,0,z) \mid  0\leq z \leq  r \} \subset \R^3$ and $D= \left\{(x,y,r)\mid x^2+y^2  \leq r^2\right\}$.  Let $(X,d)$ be the metric space such that $X$ consists of the union of $D$ and $I$ and $d$ is the intrinsic metric induced by $(\mathbb{R}^3, ||\cdot||)$. We now let $R=2r$, $p=(0,0,0)$ and $p_1=(x_1,y_1,r) \in \partial D$.
For $t \in (0, R)$, the set $S(p,p_1;R,t_1)$ consists of two points contained in $\partial D$. In this case, 
\[
h(p,R,t_1)= h(0, r, t_1)
\] 
where the right hand side is the function defined on the set $S(0, p'; r, t_1) \subset \R^2$  and $p'=(x_1,y_1)$.  For $t_1 \in (R, 2R)$ the set $S(p,p_1;R,t_1)$ is empty.   Therefore, it is impossible to choose $\beta \in (0,1)$ and $C>0$ such that $X$ satisfies the $2$-dimensional $(C,\beta)$-tetrahedral property at $p$ for radius $R$. 

In opposition,  for any $\beta' \in (0,1)$ we can choose $C'$ in the following manner. Integrate 
$h(p,R,t_1)= h(0, r, t_1) \geq C_{\R^2}(\beta') r$ from $(1-\beta')R$ to $(1-\beta'/2)R$ and get $C_{\R^2}(\beta') (\beta'R/2)r=   \tfrac{C_{\R^2}(\beta')}{ 2^{2}2}    (2\beta') R^2$. That is, choose $C'=C_{\R^2}(\beta')/2^{3}$. 
\end{example}

\subsubsection{Volumes of Balls and Convergence Theorems}\label{ssec-vol}

When $X$ is a Riemannian manifold that satisfies the  $(C,\beta)$-tetrahedral property at $p$ for radius $r$ the following volume estimate for balls holds. 

\begin{thm}[Portegies-Sormani, Theorem 3.39 in \cite{PorSor}]\label{thm-vol}
Let $M$ be a  closed oriented Riemannian manifold. If $M$ satisfies the $n$-dimensional $(C,\beta)$-integral tetrahedral property at a point $p$ for radius $r$, then 
\begin{equation}
\vol(B_r(p)) \geq C(2\beta)^{n-1}r^n.
\end{equation}
\end{thm}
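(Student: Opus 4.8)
The plan is to bound $\vol(B_r(p))$ from below by feeding the distance functions to $p_1,\dots,p_{n-1}$ into the coarea formula and then recognizing that the integrand which appears is exactly the quantity $h$ controlled by the integral tetrahedral property.

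First I would fix the points $p_1,\dots,p_{n-1}\in\bar X=M$ supplied by Definition \ref{def-integralCb}, set $\rho_i=d(\cdot,p_i)$, and assemble the Lipschitz map $F=(\rho_1,\dots,\rho_{n-1})\colon \bar B_r(p)\to\mathbb{R}^{n-1}$. Each $\rho_i$ is $1$-Lipschitz, so where $F$ is differentiable the rows $\nabla\rho_i$ of $dF$ have norm $\le 1$, and Hadamard's inequality gives $J_{n-1}F\le 1$ for the $(n-1)$-Jacobian. The coarea formula for Lipschitz maps then yields
\[
\int_{\mathbb{R}^{n-1}}\mathcal{H}^1\!\big(F^{-1}(\mathbf{t})\cap\bar B_r(p)\big)\,d\mathbf{t}=\int_{\bar B_r(p)}J_{n-1}F\,d\mathcal{H}^n\le \vol(B_r(p)).
\]
Discarding the contribution of $\mathbf{t}$ outside the box $[(1-\beta)r,(1+\beta)r]^{n-1}$ only decreases the left-hand side, so it remains to bound the fiber lengths over this box from below by $h$.

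The heart of the argument, and the step I expect to be the main obstacle, is the claim that for almost every $\mathbf{t}=(t_1,\dots,t_{n-1})$,
\[
\mathcal{H}^1\!\big(F^{-1}(\mathbf{t})\cap\bar B_r(p)\big)\ge h(p,r,t_1,\dots,t_{n-1}).
\]
When $h=0$ there is nothing to prove, so assume $h>0$; then the level set $S=S(p,p_1,\dots,p_{n-1};r,t_1,\dots,t_{n-1})$ has at least two points, any two distinct points of which are at distance $\ge h$. I would restrict to regular values $\mathbf{t}$ for which $F^{-1}(\mathbf{t})$ is a $1$-manifold meeting the sphere $S(p;r)=\partial B_r(p)$ transversally; this is a full-measure set of $\mathbf{t}$. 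For such $\mathbf{t}$ the set $F^{-1}(\mathbf{t})\cap\bar B_r(p)$ decomposes into interior circles together with arcs whose endpoints are exactly the points of $S$. Since $|S|\ge 2$ there is at least one such arc, its two endpoints are distinct points of $S$ and hence at distance $\ge h$, and because the $\mathcal{H}^1$-length of a curve dominates the distance between its endpoints, this arc alone has length $\ge h$, proving the claim. The delicate points are precisely the transversality reduction: the distance functions are only Lipschitz, so one either appeals to Sard for the a.e.\ differentiable structure of $F$ or, more robustly, replaces \emph{fiber arc} by the $1$-dimensional slice $\langle T,F,\mathbf{t}\rangle$ of the integral current $T$ of integration over $B_r(p)$, and replaces \emph{length $\ge$ endpoint distance} by the filling-volume lower bound $\mass(\langle T,F,\mathbf{t}\rangle)\ge h$ for slices of integral currents in the sense of Ambrosio--Kirchheim.

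Combining the two displays with the integral tetrahedral property of Definition \ref{def-integralCb} closes the argument:
\[
\vol(B_r(p))\ge\int_{[(1-\beta)r,(1+\beta)r]^{n-1}}h(p,r,t_1,\dots,t_{n-1})\,dt_1\cdots dt_{n-1}\ge C(2\beta)^{n-1}r^n.
\]
The rigorous formulation in the integral current category, which is what is needed for the later passage to integral current spaces and for the sliced filling volume $\SF_{n-1}$, runs identically, with the Lipschitz coarea formula replaced by the coarea inequality for masses of slices and the elementary \emph{arc length $\ge$ endpoint distance} estimate replaced by the isoperimetric lower bound on the mass of a $1$-current in terms of the distance between its boundary points.
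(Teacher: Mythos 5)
Your skeleton (coarea/slicing inequality, a pointwise fiber bound by $h$, then the integral tetrahedral hypothesis) is the morally correct strategy, but note first that it is not the paper's route: this paper never proves Theorem \ref{thm-vol} at all --- it quotes it from Portegies--Sormani --- and its own analogues (Theorem \ref{tetra-ball}, hence Theorem \ref{tetra-manifold}) are obtained by citing Theorem \ref{dist-set-2} as a black box, observing that $s_i=d(p,p_i)=r$ makes the integration region $[0,2r]^{n-1}$ contain the box $[(1-\beta)r,(1+\beta)r]^{n-1}$, and then applying the integral tetrahedral hypothesis together with $\mass(S(p,r))=\vol(B_r(p))$ (Lemma 3.2 of \cite{PorSor}). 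What you are attempting is a re-proof of Theorem \ref{dist-set-2} itself, and that is exactly where your argument has a gap.

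The gap is the pointwise estimate $\mathcal{H}^1\bigl(F^{-1}(\mathbf{t})\cap\bar B_r(p)\bigr)\ge h$. Your justification needs a.e.\ fiber to be a compact $1$-manifold whose boundary is \emph{exactly} $S$, via Sard and transversality; but the $\rho_i$ are not smooth across cut loci, $S(p;r)$ need not be a hypersurface once $r$ exceeds the injectivity radius, and there is no Sard-type theorem for such Lipschitz maps. Concretely, a fiber component can be an arc with one endpoint on $S$ and the other terminating at an interior cut-locus point, and then ``length $\ge$ endpoint distance'' controls a quantity unrelated to $h$. Your current-theoretic fallback does not close this: Ambrosio--Kirchheim slicing gives existence of slices, the coarea inequality for their masses, and that $\partial\bigl(\langle T\rstr B_r(p),F,\mathbf{t}\rangle\bigr)$ is a $0$-cycle \emph{supported in} $S$; and from a \emph{nonzero} such boundary your $1$-Lipschitz test-function argument does yield mass $\ge h$. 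What it does not yield --- and what the smooth picture handed you for free through ``boundary of the fiber equals $S$'' --- is that this boundary current is nonzero whenever $|S|\ge 2$. Support contained in $S$ is perfectly compatible with the slice boundary vanishing identically (cancellation of multiplicities, or the sphere current simply missing the set-theoretic intersection, e.g.\ at points of $S$ that are local maxima of $d(\cdot,p)$ and so carry none of $\partial(T\rstr B_r(p))$), in which case the chain of inequalities breaks at its crucial link. Ruling this out is the actual content of Portegies--Sormani's Theorem 3.25 (Theorem \ref{dist-set-2} here), so your proposal, as written, assumes the very inequality that constitutes the theorem's substance.
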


\begin{rmrk}
Portegies-Sormani proved Theorem \ref{thm-vol} for integral current spaces as well. There, the volume is replaced by a measure coming from the current structure. Since the theory behind these spaces requires several definitions we omit the general statement but prove a similar result in Section \ref{sec-modif}, Theorem \ref{mass-ball}.
\end{rmrk}

Combining Theorem \ref{thm-vol} with Gromov-Haus\-dorff compactness theorem gives the following. 

\begin{thm}[Portegies--Sormani, Theorem 3.41 in \cite{PorSor}]\label{compGH}
\label{thm-gh}
Let $r_0 > 0$, $\beta \in (0,1)$, $C> 0$, $V > 0$ and $\{M_i\}_{i=1}^{\infty}$ be a sequence of $n$-dimensional closed Riemannian manifolds such that for all $i$ 
\begin{enumerate}
\item 
$\vol (M_i) \leq V$
\item $M_i$ satisfies the $n$-dimensional $(C,\beta)$-(integral) tetrahedral property for all  $r\leq r_0$. 
\end{enumerate}
Then  a subsequence converges in Gromov-Hausdorff sense. 
\end{thm}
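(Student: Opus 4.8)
The plan is to deduce the conclusion from Gromov's precompactness theorem, which guarantees a Gromov-Hausdorff convergent subsequence as soon as the family $\{M_i\}$ is \emph{uniformly totally bounded}: the diameters $\diam(M_i)$ are bounded independently of $i$, and for every $\eps>0$ there is a bound $N(\eps)$, again independent of $i$, on the number of $\eps$-balls needed to cover $M_i$. Both conditions will be harvested from a single ingredient, a uniform lower bound on the volumes of small balls. By hypothesis each $M_i$ satisfies the $(C,\beta)$-(integral) tetrahedral property at \emph{every} point for every $r\le r_0$; since the tetrahedral property implies its integral version (the Portegies--Sormani proposition), Theorem \ref{thm-vol} applies in either case and gives
\[
\vol(B_r(p))\ge C(2\beta)^{n-1}r^n \qquad \text{for all } p\in M_i \text{ and all } r\le r_0 .
\]

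First I would establish the covering bound. Fix $\eps\le r_0$ and choose a maximal $\eps$-separated set $\{q_1,\dots,q_N\}\subset M_i$. The balls $B_{\eps/2}(q_j)$ are pairwise disjoint, and since $\eps/2\le r_0$ each has volume at least $C(2\beta)^{n-1}(\eps/2)^n$; comparing the total with $\vol(M_i)\le V$ yields
\[
N\le \frac{2^n V}{C(2\beta)^{n-1}\eps^n}=:N(\eps),
\]
a bound independent of $i$. By maximality the balls $B_\eps(q_j)$ cover $M_i$, so $M_i$ is covered by at most $N(\eps)$ balls of radius $\eps$. For $\eps>r_0$ one covers instead by $r_0$-balls, so the covering number is uniformly controlled for all $\eps>0$.

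Next I would bound the diameter. Taking $\eps=r_0$ above produces an $r_0$-net of at most $N_0:=N(r_0)$ points whose $r_0$-balls cover $M_i$. Form the graph on these centers joining $q_j,q_k$ precisely when $B_{r_0}(q_j)\cap B_{r_0}(q_k)\neq\emptyset$; were it disconnected, the balls would split $M_i$ into two disjoint nonempty open sets, contradicting connectedness, so the graph is connected with at most $N_0$ vertices. Chaining overlapping balls along a path gives $d(q_j,q_k)<2r_0(N_0-1)$ for any two centers, and every point lies within $r_0$ of some center, whence $\diam(M_i)\le 2r_0 N_0=:D_0$, uniformly in $i$. With the uniform diameter and covering bounds in place, Gromov's precompactness theorem produces the desired Gromov-Hausdorff convergent subsequence.

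The only point demanding genuine care is the diameter estimate. The volume lower bound is available only for radii $r\le r_0$, so it controls the \emph{cardinality} of an $r_0$-net but not, by itself, the diameter; converting a uniform net bound into a uniform diameter bound is exactly where connectedness of the $M_i$ is used (for a uniformly bounded number of components the argument applies componentwise). The remaining steps are a standard packing-and-covering computation.
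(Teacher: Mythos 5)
Your proposal is correct and takes essentially the same route as the paper: the paper obtains this theorem by combining the ball-volume lower bound of Theorem \ref{thm-vol} with a packing bound and Gromov's compactness theorem, and its proof of the generalized analogue (Theorem \ref{tetra-compactness11}) is exactly your argument, including the use of connectedness (there, the length-space property) together with the packing bound at scale comparable to $r_0$ to extract the uniform diameter bound. The extra care you flag about converting the net bound into a diameter bound is precisely the point the paper handles the same way.
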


Note that the metric space given in Example \ref{ex-plane&Line} does not satisfy condition $(2)$ of Theorem \ref{compGH}. There, the tetrahedral property is not satisfied at $p$ on the positive part of the $z$-axis for any $r \leq ||p||$. 

Using the intrinsic flat distance which is defined at the end of Subsection \ref{ssec-currents}, Sormani's Gromov-Hausdorff compactness theorem, Theorem \ref{compGH}, can be improved in the following way. 

\begin{thm}[Portegies--Sormani, Theorem 5.2 in \cite{PorSor}]\label{compGH=IF}
Let $r_0>0$, $0 <  \beta < 1$,  $C, V_0>0$ and $\{M_i\}$ be a sequence of $n$-dimensional closed oriented Riemannian manifolds 
such that for all $i$,

\begin{enumerate}
\item 
$\vol (M_i) \leq V$
\item $M_i$ satisfies the $n$-dimensional $(C,\beta)$-(integral) tetrahedral property for all $r \leq  r_0$. 
\end{enumerate}
Then $\{M_i\}$ has a subsequence that converges in Gromov-Hausdorff and intrinsic flat sense such that the limit spaces agree.  Hence, the limit space is $\mathcal{H}^n$-countably rectifiable.
\end{thm}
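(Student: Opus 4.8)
The plan is to produce the two kinds of convergence from the standard compactness theorems and then to force the limits to coincide by means of the ball-volume lower bound coming from the tetrahedral property. Since the $(C,\beta)$-tetrahedral property implies the $(C,\beta)$-integral tetrahedral property, Theorem \ref{thm-vol} applies to each $M_i$ and gives $\vol(B_r(p)) \geq C(2\beta)^{n-1}r^n$ for every $p \in M_i$ and every $r \leq r_0$. This noncollapsing estimate, uniform in $i$ and in the center $p$, is the key input for the whole proof.

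First I would extract a Gromov--Hausdorff convergent subsequence. This is precisely Theorem \ref{compGH}: together with $\vol(M_i) \leq V$, the volume bound on balls controls the number of disjoint $r_0$-balls (each of volume at least $C(2\beta)^{n-1}(r_0/2)^n$) and hence the covering numbers, so the family is uniformly totally bounded and a subsequence satisfies $M_i \GHto Y$ with $Y$ a compact metric space. Since $Y$ is compact its diameter is finite, and along this subsequence $\diam(M_i) \to \diam(Y)$, giving a uniform bound $\diam(M_i) \leq D_0$. Regarding each closed oriented manifold as an $n$-dimensional integral current space, its mass equals $\vol(M_i) \leq V$ and its boundary mass is $\mass(\bdry M_i) = 0$; with the diameter bound $D_0$, Wenger's compactness theorem \cite{Wen} yields a further subsequence with $M_i \Fto (X_\infty, d_\infty, T_\infty)$ for some integral current space that could a priori be the zero space.

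The decisive step is to identify the two limits. I would realize both convergences in a common complete metric space $Z$, so that the $M_i$ converge in the Hausdorff sense to an isometric copy of $Y$ while their current structures converge in the flat sense to $T_\infty$. The containment result of Sormani--Wenger \cite{SW} quoted in the introduction then gives that $X_\infty$ embeds isometrically into $Y$ whenever $T_\infty \neq 0$. For the reverse inclusion and to exclude collapse, I would argue pointwise: given $y \in Y$, pick $p_i \in M_i$ with $p_i \to y$ in $Z$; the balls $B_r(p_i)$ carry mass at least $C(2\beta)^{n-1}r^n$ for $r \leq r_0$, and pushing this lower bound through the flat convergence forces the mass measure of $T_\infty$ on every neighborhood of $y$ to be positive. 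Hence $y \in \spt(T_\infty) = X_\infty$, which simultaneously shows $T_\infty \neq 0$ and $Y \subseteq X_\infty$; combined with the Sormani--Wenger inclusion this yields $X_\infty = Y$. Finally, since $(X_\infty, d_\infty, T_\infty)$ is an integral current space in the sense of Ambrosio--Kirchheim, $X_\infty$ is $\mathcal{H}^n$ countably rectifiable, and therefore so is the common limit $Y$.

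I expect the main obstacle to be exactly this transfer of the volume lower bound across intrinsic flat convergence. Flat convergence allows mass to be lost through cancellation or through thin pieces pinching off, so positivity of the limiting mass near a given point is not automatic; the argument must use that the tetrahedral estimate deposits a definite amount of mass, $C(2\beta)^{n-1}r^n$, in \emph{every} ball of radius $r \leq r_0$, uniformly in $i$ and in the center, in order to rule out this loss and guarantee that the intrinsic flat limit drops no point of the Gromov--Hausdorff limit. Once this noncollapsing is in place, the remainder is a matter of invoking Theorem \ref{compGH}, Wenger's theorem and the containment theorem of \cite{SW}.
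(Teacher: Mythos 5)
Your first two steps are fine: the ball-volume bound from Theorem \ref{thm-vol} gives uniform total boundedness and hence Gromov--Hausdorff precompactness (Theorem \ref{compGH}), and Wenger's compactness theorem \cite{Wen} gives a further subsequence converging in the intrinsic flat sense. The genuine gap is in your ``decisive step.'' You propose to identify the two limits by pushing the lower bound $\vol(B_r(p_i)) \geq C(2\beta)^{n-1}r^n$ through the flat convergence to conclude that $\|T_\infty\|$ is positive near every point of the Gromov--Hausdorff limit. This implication is not merely unjustified --- it is false. Mass is only lower semicontinuous under flat convergence, which bounds the limit mass from \emph{above} by $\liminf$ of the masses, so mass can disappear in the limit; and the cancellation examples of Sormani--Wenger \cite{SW10} (e.g.\ two oppositely oriented, nearly concentric spheres joined by many tiny tubes, forming a closed oriented surface) satisfy every hypothesis your step uses --- closed, oriented, uniform volume upper bound, uniform lower bounds on the volumes of \emph{all} balls of radius $\leq r_0$, Gromov--Hausdorff convergence --- and yet their intrinsic flat limit is the zero current space, not the Gromov--Hausdorff limit. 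So ball-mass lower bounds alone cannot rule out cancellation; your own closing paragraph concedes the difficulty but does not supply a mechanism that resolves it.

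What your proposal never uses is the actual content of the tetrahedral property beyond volumes of balls: the lower bound on the \emph{sliced filling volume}. By Theorem \ref{dist-set-2} (equivalently, the argument of Theorem \ref{tetra-ball} specialized to $\alpha = 1-\beta$, $\beta' = 1+\beta$), the $(C,\beta)$-integral tetrahedral property at $p$ for radius $r$ gives
\[
\SF_{n-1}(p,r) \geq C(2\beta)^{n-1} r^n ,
\]
and filling volumes --- unlike mass --- do not cancel under flat convergence. This is exactly the hypothesis of Theorem \ref{SF_k-compactness} (Portegies--Sormani, Theorem 5.1 in \cite{PorSor}), and the intended proof, which is the same route the paper follows for its generalized analogue in Theorem \ref{tetra-compactness3}, is simply: verify $\mass(T_i)\leq V$, $\mass(\partial T_i)=0$, a uniform diameter bound (from the ball-counting argument together with connectedness and the length-space property), and the sliced filling bound above, then apply Theorem \ref{SF_k-compactness}. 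That theorem delivers in one stroke the subsequence converging in both senses to the same limit and the $\mathcal H^n$ countable rectifiability; no separate containment argument via \cite{SW} or pointwise mass-transfer argument is needed, and none would suffice in its place.
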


\subsection{Integral Current Spaces and Intrinsic Flat Distance}\label{ssec-currents}

In this subsection we give a brief introduction to integral current spaces and the intrinsic flat distance defined by Sormani-Wenger \cite{SW}. We recall that the definition of an integral current space is based upon work on currents in metric spaces by Ambrosio-Kirchheim \cite{AK}. We state the definitions of sliced filling volume and $k$-th sliced filling. These are the main tools used by Portegies-Sormani to prove (integral) tetrahedral results since they provide with estimates on the mass measure of balls that allows to prove Gromov-Hausdorff subconvergence and prevents intrinsic flat limits to be smaller than the Gromov-Hausdorff limit, see Theorem \ref{dist-set-2} and Theorem \ref{SF_k-compactness}. We will use these tools to derive our results.\\

An \textit{$n$-dimensional integral current space $(X, d, T)$} consists of a metric space $(X, d)$ and an $n$-dimensional integral current defined on the completion of $X$, $T\in I_n(\bar{X})$, such that $\set(T)=X$. We denote by $\mathfrak M^n$ the space of  all $n$-dimensional integral currents.  Recall that $T$ endows $\bar X$ with a finite Borel measure $||T||$,
called the mass measure of $T$ and that $\set(T)$ is defined as 
\begin{equation}
\set(T)= \left\{ x \in \bar X \, \mid \, \liminf_{r \downarrow 0} \frac{\|T\|(B_r(x))}{\omega_n r^n }> 0 \right\}.
\end{equation}
The mass of $T$ is defined as $\mass(T)=||T||(X)$.
Ambrosio-Kirchheim proved that $\set(T)$ is $\hm^n$-rectifiable. That is,  there exist Borel sets $A_i \subset \R^n$ and Lipschitz functions $\varphi_i: A_i \to X$ such that 
\begin{equation}
\hm^n\left(  \set(T) \backslash \cup_{i=1}^\infty \varphi_i(A_i) \right) = 0.
\end{equation}

An $n$-dimensional compact oriented Riemannian manifold $M$ has a canonical current given by integration of top forms:
\begin{equation}
T  (\omega) = \int_M \omega.
\end{equation}
With this current, $(X,d,T)$ is an $n$-dimensional integral current space, the mass measure of $T $ equals the Riemannian volume and $\set(T) = M$. 

Let $B_r(p) \subset X$ be a ball of radius $r$ and center $p$. To obtain Gromov-Hausdorff and intrinsic flat convergence results we are interested in calculating a lower bound for $||T||(B_r(p))$. Thus, we consider the triple 
\begin{equation}
S\left(p,r\right)=\left(\set(T\rstr B_r(p)),d,T\rstr B_r(p)\right),
\end{equation} 
where $T\rstr B_r(p)$ denotes the restriction of $T$ to $B_r(p)$.   Portegies-Sormani proved in Lemma 3.1 of \cite{PorSor} that if $\left(X,d,T\right)$ is an $n$-dimensional integral current space, then for almost every $r > 0$,
$S(p,r)$ is an $n$-dimensional integral current space.   Furthermore,
\begin{equation}
\label{ball-in-ball}
B_r(p) \subset \set(S(p,r))\subset \bar{B}_r(p)\subset X.
\end{equation} 
When $M$ is a compact oriented Riemannian manifold (with or without boundary) endowed with the canonical current then $S(p,r)$ is an integral current space for all $r > 0$ and  $\set(S(p,r))=\bar{B}_r(p)$ (see Lemma 3.2 in \cite{PorSor}). 

Lower bounds for  $\mass(S(p,r))$ are obtained by studying the sliced filling volume and the $k$-th sliced filling.  The \textit{filling volume} of  an $n$-dimensional integral current space, $M=(X,d,T)$ is defined as 
\begin{equation*}
\fillvol(M) =  \inf\{ \mass(S)\}, 
\end{equation*}
where the infimum is taken over all  $(Y,d_Y, S) \in \mathfrak M^{n+1}$ such that there exists an isometry $\varphi: X \to \set(\bdry S)$ with  $\varphi_\sharp T= \bdry S$.

Let  $F_1, F_2,...F_k: X \to \R$  be Lipschitz functions with $k\le n-1$, 
For the definition of a slice $\Slice(T, F, t)$ for $t \in \R^k$ see Portegies-Sormani, \cite{PorSor}, 
the \textit{sliced filling volume} of $\partial S(p,r)\in \intcurr_{n-1}(\bar{X})$ is defined as
\begin{equation}
\SF(p,r,F_1,...,F_k)
=\int_{t\in A_r} \fillvol(\partial\Slice(S(p,r),F,t)) \, d \mathcal{L}^k, 
\end{equation}
where $F=(F_1,...,F_k)$ and,
\begin{align*}
A_r  &= [\min F_1, \max F_1]\times [\min F_2, \max F_2]\times \cdots \times [\min F_k, \max F_k]\\
\min F_j & = \min\{F_j(x)\,|\, d(x,p) \leq r\}\\
\max F_j & =  \max\{F_j(x)\,|\, d(x,p) \leq r\}. 
\end{align*}

Given $q_1,...,q_k \in X$, set  $\rho_i(x)=d(q_i,x)$ and define
\begin{equation}
\SF(p,r,q_1,...,q_k)= \SF(p,r,\rho_1,...,\rho_k). 
\end{equation}
Then, the \textit{$k$-th sliced filling} is defined as
\begin{equation}
\SF_k(p,r)= \sup\{\SF(p,r,\rho_1,...,\rho_k)|  q_i \in X,\,\,d(p,q_i)=r\}.
\end{equation}

By bounding $\SF$ and $\SF_k$ Portegies-Sormani obtained a mass measure estimate and, a Gromov-Hausdorff and intrinsic flat convergence theorem. 

\begin{thm}[Portegies-Sormani, Theorem  3.25 in \cite{PorSor}]\label{dist-set-2}
Let $(X,d,T)$ be an $n$-di\-men\-sio\-nal integral current space and
$p_1,...,p_{n-1} \in X$. If $\bar{B}_R(p)\cap \set (\partial T)=\emptyset$ then
for almost every $r\in (0,R)$,
\begin{eqnarray*}
\mass(S(p,r)) &\ge&
\SF(p,r,p_1,...,p_{n-1}) \\
&\ge&  \int_{s_1-r}^{s_1+r} \cdots \int_{s_{n-1}-r}^{s_{n-1}+r }
h(p,r,t_1,..., t_{n-1})
\, dt_1dt_2...dt_{n-1},
\end{eqnarray*}
where $s_i=d(p_i,p)$,
\begin{equation}
h(p,r,t_1,\ldots, t_{n-1})=\left\{
\begin{array}{ll}
      \inf\{ \, d(x,y) \mid x\neq y, x,y\in S \}  & |S| \geq 2 \\
      0 & \text{otherwise} \\
\end{array} 
\right.
\end{equation}
and $S= S(p;r) \cap S(p_1; t_1)\cap \dots\cap S(p_{n-1};t_{n-1})$.
\end{thm}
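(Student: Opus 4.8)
The statement splits into two inequalities, which I would treat separately after recording the set-up: here $F=(\rho_1,\dots,\rho_{n-1})$ with $\rho_i(x)=d(p_i,x)$, each $\rho_i$ is $1$-Lipschitz, the current being sliced is $T\rstr B_r(p)$ with $\mass(S(p,r))=\|T\|(B_r(p))$, and for almost every $t$ the slice $\Slice(S(p,r),F,t)$ is a $1$-dimensional integral current whose boundary $\partial\Slice(S(p,r),F,t)$ is $0$-dimensional.

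\emph{First inequality,} $\mass(S(p,r))\ge \SF(p,r,p_1,\dots,p_{n-1})$. I would use two ingredients. First, since each slice is itself an admissible filling of its own boundary (take $W=\Slice(S(p,r),F,t)$ and the identity isometry), one has $\fillvol(\partial\Slice(S(p,r),F,t))\le \mass(\Slice(S(p,r),F,t))$ for a.e. $t$. Second, the coarea inequality for slicing by a Lipschitz map gives $\int_{\R^{n-1}}\mass(\Slice(S(p,r),F,t))\,d\mathcal{L}^{n-1}(t)\le \big(\prod_{i}\Lip\rho_i\big)\,\mass(S(p,r))\le \mass(S(p,r))$, using $\Lip\rho_i\le 1$. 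Integrating the first estimate over $A_r$ and invoking the second yields $\SF(p,r,p_1,\dots,p_{n-1})=\int_{A_r}\fillvol(\partial\Slice)\,d\mathcal{L}^{n-1}\le \mass(S(p,r))$.

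\emph{Second inequality,} $\SF(p,r,p_1,\dots,p_{n-1})\ge \int_{s_1-r}^{s_1+r}\!\cdots\!\int_{s_{n-1}-r}^{s_{n-1}+r} h\,dt_1\cdots dt_{n-1}$. First I would reduce the domain: if $t$ lies in the box $\prod_j[s_j-r,s_j+r]$ but outside $A_r=\prod_j[\min\rho_j,\max\rho_j]$, then some $t_j$ is outside the range of $\rho_j$ on $\bar B_r(p)$, so no point of $S(p;r)\subseteq\bar B_r(p)$ satisfies $d(p_j,\cdot)=t_j$, whence $S=\emptyset$ and $h=0$; thus the integral over the box equals the integral over $A_r$, and it suffices to show $\fillvol(\partial\Slice(S(p,r),F,t))\ge h(p,r,t)$ for a.e. $t\in A_r$. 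To locate the boundary I would use that $\bar B_R(p)\cap\set(\partial T)=\emptyset$ forces $\partial(T\rstr B_r(p))=\langle T,\rho_p,r\rangle$ with $\rho_p=d(p,\cdot)$, together with the iterated-slice boundary identity $\partial\Slice(S(p,r),F,t)=\pm\langle\langle T,\rho_p,r\rangle,F,t\rangle$. This presents $\partial\Slice$ as a $0$-dimensional integral current supported in $S(p;r)\cap\bigcap_i S(p_i;t_i)=S$. Whenever $\partial\Slice\ne 0$ it is a nontrivial integer combination $\sum_j\theta_j\delta_{x_j}$ with $\sum_j\theta_j=0$ and $x_j\in S$, so any filling must transport a unit of mass between two distinct points of $S$, a distance of at least $h$; hence $\fillvol(\partial\Slice)\ge h$.

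\textbf{Main obstacle.} The delicate step is upgrading the bound ``$\fillvol(\partial\Slice)\ge h$ when $\partial\Slice\ne 0$'' to the pointwise almost-everywhere estimate needed for the integral, i.e. controlling the set of $t\in A_r$ where the slice degenerates, $\partial\Slice=0$, while $|S|\ge 2$ with positive separation $h>0$. This is where I expect the argument to require the fine structure theory of slices --- almost-everywhere integrality of $\Slice(S(p,r),F,t)$, the commutation of boundary with slicing, and measurability and lower semicontinuity of $t\mapsto\fillvol(\partial\Slice)$. By comparison, the reduction of the $0$-dimensional filling problem to an optimal-transport pairing estimate and the verification that $\spt(\partial\Slice)\subseteq S$ are comparatively routine once the slicing identities are in place.
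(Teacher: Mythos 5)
A preliminary remark: the paper you were given does not prove this theorem at all --- it is quoted verbatim as background from Portegies--Sormani (Theorem 3.25 of \cite{PorSor}) --- so your attempt has to be measured against the proof in that reference. Your overall architecture does match theirs: the first inequality via ``a slice is an admissible filling of its own boundary'' together with the Ambrosio--Kirchheim slicing mass inequality $\int \mass(\Slice(S(p,r),F,t))\,d\mathcal{L}^{n-1}(t)\le \mass(S(p,r))$ for the $1$-Lipschitz functions $\rho_i$, and the second inequality via the identities $\partial(T\rstr B_r(p))=\langle T,\rho_p,r\rangle$ for a.e.\ $r$ (this is where $\bar{B}_R(p)\cap\set(\partial T)=\emptyset$ enters), commutation of boundary with slicing, and the transport lower bound for nonzero $0$-cycles. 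Those parts of your proposal are sound.

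There is, however, a genuine gap, and it sits exactly where you flagged your ``main obstacle'': your pointwise estimate is $\fillvol(\partial\Slice(S(p,r),F,t))\ge h(p,r,t)$ only on the event $\partial\Slice(S(p,r),F,t)\neq 0$, and you never rule out a positive-measure set of parameters $t$ where this boundary slice vanishes while the metric intersection $S$ has at least two points with $h(p,r,t)>0$. This cannot be repaired by measurability or lower semicontinuity of $t\mapsto\fillvol$: if that degenerate set had positive measure, the claimed integral inequality would simply be false there, since the integrand on the right would exceed the one on the left. What closes the gap --- and what is the real content of the Portegies--Sormani argument --- is the Ambrosio--Kirchheim structure theorem for slices of integer rectifiable currents: writing $\Phi=(\rho_p,\rho_1,\dots,\rho_{n-1}):\bar{X}\to\R^n$, for a.e.\ $(r,t)\in\R^n$ the $0$-dimensional slice $\langle T,\Phi,(r,t)\rangle$ is an integral current whose set (its atoms, each carrying integer multiplicity at least $1$) coincides with $\set(T)\cap\Phi^{-1}(r,t)$. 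Since the paper defines the spheres $S(q;s)$ inside $X=\set(T)$, this fiber is precisely $S$; hence for a.e.\ $(r,t)$ \emph{every} point of $S$ is an atom of $\partial\Slice(S(p,r),F,t)$, so $|S|\ge 2$ forces the boundary slice to be a nonzero $0$-cycle whose support contains two distinct points of $S$, putting you back in the case you did handle. (The same theorem is also what legitimizes your assertion that the atoms $x_j$ lie in $S$ rather than merely in the completion $\bar{X}$.) Without invoking, or reproving, this set-to-fiber correspondence, the second inequality does not follow from your argument.
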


\begin{thm}[Portegies-Sormani, Theorem 5.1 in \cite{PorSor}]\label{SF_k-compactness}
Let $(X_i, d_i, T_i)$ be a sequence of compact $n$-dimensional integral current spaces,
$V,A,D, C, r_0 > 0$ and some  $k\in \{0,...,n-1\}$ such that for all $i$,  $p \in X_i$ and $r \leq r_0$:
\[
\mass(T_i) \le V,\,\,\mass(\partial T_i) \le A,\,\,\diam(X_i)\le D,\,\SF_k(p,r) \ge C r^n.
\]
Then there is a subsequence of 
$(X_i,d_i,T_i)$ that converges in the Gromov-Hausdorff and
intrinsic flat senses to the same metric space. Furthermore, the limit space is $\mathcal{H}^n$-countably rectifiable. 
\end{thm}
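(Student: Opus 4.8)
The plan is to obtain Gromov--Hausdorff precompactness and intrinsic flat precompactness from two different consequences of the hypotheses, and then to invoke the Sormani--Wenger comparison of the two limits, using a uniform lower bound on the mass of balls to force the limits to be nondegenerate and to coincide. First I would turn the sliced-filling hypothesis into a lower bound on $\|T_i\|(B_r(p))$. By the slicing and coarea inequalities underlying Theorem~\ref{dist-set-2} (the mass of $S(p,r)$ dominates the integral over slice parameters of the masses of the slices, and each slice fills its own boundary), one has $\mass(S(p,r)) \ge \SF_k(p,r)$ for every admissible $k$; combined with $\SF_k(p,r) \ge Cr^n$ this gives, for every $p \in X_i$ and almost every $r \le r_0$,
\[
\|T_i\|(B_r(p)) = \mass(S(p,r)) \ge Cr^n ,
\]
and since $r \mapsto \|T_i\|(B_r(p))$ is monotone the estimate holds for all $r \le r_0$. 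This is the non-collapsing bound that drives the rest of the argument.

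For precompactness I would run a packing argument on the Gromov--Hausdorff side. Fix $0 < \eps \le r_0$ and let $\{x_1,\dots,x_N\}$ be a maximal $\eps$-separated subset of $X_i$; the balls $B_{\eps/2}(x_j)$ are pairwise disjoint, so
\[
N\, C (\eps/2)^n \le \sum_{j=1}^{N} \|T_i\|\big(B_{\eps/2}(x_j)\big) \le \mass(T_i) \le V ,
\]
whence $N \le 2^n V /(C\eps^n)$, a bound independent of $i$. Thus $\{X_i\}$ is uniformly totally bounded and Gromov's theorem yields a subsequence with $X_i \GHto Y$ for some compact metric space $Y$. Independently, the bounds $\mass(T_i)\le V$, $\mass(\partial T_i)\le A$, $\diam(X_i)\le D$ place the sequence in a class that is precompact for the intrinsic flat distance by Wenger's theorem \cite{Wen}; after passing to a further subsequence the $(X_i,d_i,T_i)$ converge in the intrinsic flat sense to an integral current space $(X_\infty,d_\infty,T_\infty)$.

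By the Sormani--Wenger theory \cite{SW} both convergences can be realized inside a common compact metric space $Z$, in which the $X_i$ converge in the Hausdorff sense to an isometric copy of $Y$ and the currents converge in the flat sense to $T_\infty$; there the intrinsic flat limit embeds isometrically, giving the dichotomy that $T_\infty$ is either the zero current space or $\set(T_\infty)\subseteq Y$. The heart of the proof is to exclude collapse and to upgrade this inclusion to an equality. I would establish that $\|T_\infty\|$ has positive lower density at every $y \in Y$: choosing $x_i \in X_i$ with $x_i \to y$ in $Z$, one selects, for almost every small $\rho$, good radii along which the restrictions $T_i \rstr B_\rho(x_i)$ have mass at least $C\rho^n$ and boundary mass controlled by the $d(x_i,\cdot)$-slice of $T_i$, and converge in the flat sense in $Z$ to $T_\infty\rstr B_\rho(y)$. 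Showing that the lower bound on these restricted currents survives in the limit yields $\|T_\infty\|(B_\rho(y)) > 0$, hence $y \in \set(T_\infty)$; this forces $Y \subseteq \set(T_\infty)$, so $Y = \set(T_\infty) = X_\infty$ and the two limits agree. Finally, $X_\infty = \set(T_\infty)$ is $\hm^n$ countably rectifiable by the Ambrosio--Kirchheim structure theorem \cite{AK}.

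The delicate step is this last one: intrinsic flat convergence controls mass only through lower semicontinuity, so mass may a priori be lost in the limit through cancellation or thinning, and lower semicontinuity points the wrong way for producing a lower bound. The whole purpose of the uniform sliced-filling (equivalently, ball-mass) hypothesis is to rule this out, and the technical work lies in showing, via the non-collapse results of Sormani--Wenger inside the common space $Z$, that the positive lower bound genuinely propagates to $\|T_\infty\|$ at each point of $Y$, simultaneously excluding the zero limit and proving the pointwise inclusion $Y \subseteq \set(T_\infty)$.
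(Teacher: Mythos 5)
This theorem is not actually proved in the paper you were given: it is imported as background (Theorem 5.1 of \cite{PorSor}), so your sketch has to be measured against Portegies--Sormani's argument. Your skeleton does match theirs at the beginning and the end: $\mass(S(p,r)) \ge \SF_k(p,r) \ge Cr^n$ yields the ball-mass bound, the packing argument yields uniform total boundedness and hence Gromov--Hausdorff precompactness, Wenger's compactness theorem \cite{Wen} yields an intrinsic flat limit, the common embedding into a space $Z$ yields the dichotomy, and Ambrosio--Kirchheim \cite{AK} yields $\hm^n$ rectifiability of $\set(T_\infty)$. All of that is sound.

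The genuine gap sits exactly at the step you call the heart of the proof, and it is introduced by your parenthetical ``uniform sliced-filling (equivalently, ball-mass) hypothesis.'' These are not equivalent: the implication $\SF_k(p,r)\ge Cr^n \Rightarrow \|T_i\|(B_r(p))\ge Cr^n$ goes one way only, and the distinction is the entire reason the sliced filling volume was invented. Mass is precisely the quantity that does \emph{not} survive a flat limit --- as you yourself note, lower semicontinuity points the wrong way, and the cancellation examples of Sormani--Wenger \cite{SW10} (oppositely oriented sheets joined by tiny tunnels, which GH-converge to a nondegenerate space while IF-converging to zero) show that no argument whose only input at the limiting stage is a mass bound on $T_i \rstr B_\rho(x_i)$ can close. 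Deferring to ``the non-collapse results of Sormani--Wenger'' does not repair this, because the non-cancellation theorem in \cite{SW} takes filling-volume lower bounds on spheres, not mass lower bounds, as its hypothesis, and you never produce such bounds. What actually propagates to the limit is the sliced filling volume itself: the filling volume is continuous with respect to the flat distance in a fixed ambient space (roughly $\fillvol(A)\le \fillvol(B)+d_F^Z(A,B)$ for cycles), slices of flat-convergent currents converge for almost every slicing parameter, and therefore $\SF_k$ is appropriately semicontinuous along the convergence in $Z$; this semicontinuity is the technical core of Portegies--Sormani's proof. One carries the hypothesis $\SF_k(x_i,r)\ge Cr^n$ itself through the limit to get an $\SF$-type bound for $T_\infty$ at each $y\in Y$, and only then invokes $\|T_\infty\|(B_r(y))\ge \SF \ge Cr^n>0$ to conclude $y\in\set(T_\infty)$ and hence $Y=\set(T_\infty)=X_\infty$. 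Your proposal needs this semicontinuity statement, applied to the sliced-filling hypothesis rather than to the strictly weaker mass bound it implies; without it, the key step is not merely technical but missing.
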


We recall that for $S, T \in I_n(X)$, the flat distance between $S$ and $T$ in $X$ is defined as
\begin{equation*}
d_F^X(S, T)= \inf\{ \mass(U) + \mass(V) \, | \, S - T = U + \partial V, \, U \in I_n(X), \, V \in I_{n+1}(X) \}.
\end{equation*}
The intrinsic flat distance between two integral current spaces $(X, d_X, T)$ and $(Y, d_Y, S)$ is given by 
\begin{equation*}
\begin{split}
d_{\mathcal{F}}((X, d_X, T), (Y, d_Y, S)) 
&= \inf \Big\{ d_F^Z(\varphi_\# T, \psi_\# S ) \, | \, Z \text{ complete metric space}, \\
&\qquad \varphi: X \to Z, \psi: Y \to Z \text{ isometric embeddings} \Big\}.
\end{split}
\end{equation*}

We point out that $d_\mathcal{F}( (X, d_X, T), (Y, d_Y, S)) = 0$ if and only if there exists a current-preserving isometry $\varphi: X \to Y$, that is,  $\varphi$ is an isometry of metric spaces such that $\varphi_\# T = S$. 

\section{Vertices and Slices of Cones}\label{sec-Cones}

In this section we study the tetrahedral property in Euclidean cones $K(X)$ over metric spaces $X$ and in Alexandrov spaces. 
In Example \ref{ex-conesModT} we see that if $\diam (X) \leq \pi/3$ then $K(X)$ cannot satisfy the $n$-dimensional $(C,\beta)$-tetrahedral property at its vertex, $o$, for any $n \geq 2$, $C$ and $\beta$.  Furthermore, $K(X)$ cannot satisfy the $n$-dimensional $(C,\beta)$-integral tetrahedral property at $o$ for any $n \geq 2$, $C$ and $\beta \in (0, 1-  \sqrt{2 (1-\cos(\diam(X) ))})$.   From this we conclude in Corollary \ref{cor-AlexNoUnif} that Alexandrov spaces  with nonnegative curvature do not satisfy a uniform tetrahedral property.  

In  Example \ref{ex-projective} we show that the cone over the $2$-dimensional projective space satisfies the $3$-dimensional $(C,\beta)$-tetrahedral property. Thus, the existence of topological singularities is not per se an obstruction. Note that with the usual round metric, $\diam (\mathbb{R}P^2)=\pi/2 > \pi/3$.  In Lemma \ref{lem-slice-cone-tetra} we show that the slices of cones satisfying the $n$-dimensional tetrahedral property also satisfy the $n$-dimensional tetrahedral property.  We finish this section stating a failed attempt to show that 
Alexandrov spaces satisfy some tetrahedral property at regular points.

\begin{definition}[cf. Definition 3.6.12 in \cite{BurBurIva}]\label{cone} 
Let $(Y,d_{Y})$ be a metric space with $\diam(Y) \leq \pi$. Then $K(Y):= Y\times [0,\infty) / Y\times \{0\}$ endowed with the distance
\[
d_{K}((x,t),(y,s)):=\sqrt{t^2+s^2-2st\cos d_{Y}(x,y)}.
\]
is called the Euclidean cone over $Y$. The vertex of $Y$, that is the point corresponding to $Y \times \{0\}$, is denoted by $o$.
\end{definition}

\begin{example}\label{ex-conesModT}
Let $(X,d)$ be a metric space with $\diam(X) < \pi/3$.  Then for any integer $n \geq 2$, $C>0, \beta \in (0,1)$ and $r >0$, the cone  $K(X)$ over $X$ does not satisfy the $n$-dimensional $(C,\beta)$-tetrahedral property at its vertex $o$ for radius $r$.  This follows from the following calculation 
\begin{equation}\label{eq-ConeD}
d^2_K((x,r),(y,r))= 2r^2 - 2r^2 \cos(d(x,y))= 2r^2 (1-\cos(d(x,y))) <  r^2,
\end{equation}
which shows that for any $(x,r) \in S(o ; r)= X \times{\{r\}}$, $S((x,r) ; r) = \emptyset$. Therefore, for any $(x_1,r),...,(x_{n-1},r)  \in S(o; r)$ we have
$h(o,r,r,...,r)=0$.

We can easily see that (\ref{eq-ConeD}) also shows that for any $n \geq N$, $C>0$ and  $\beta \in (0, 1-  \sqrt{2 (1-\cos(\diam(X) ))})$, the cone  $K(X)$ over $X$ does not satisfy the $n$-dimensional $(C,\beta)$-integral tetrahedral property at its vertex $o$, for radius $r >0$.  Since,
\begin{align*}
d_K((x,r),(y,r)) \leq &  r \sqrt{2 (1-\cos(  \diam(X))))} < (1-\beta)r.
\end{align*}
which shows that  for any $(x,r) \in S(o ; r)= X \times{\{r\}}$, $S((x,r) ; t_1) = \emptyset$ for any $t_1 \in [(1-\beta)r, (1+\beta) r]$. Thus, for any choice of $(x_1,r),...,(x_{n-1},r)\in S(o; r)$ and $t_i \in [(1-\beta)r, (1+\beta)r]$, we have $h(o,r,t_1,...,t_{n-1})=0$.
\end{example}

\begin{rmrk}\label{rmrk-diam}
The tetrahedral property gives a bound from below of the diameter of a space as we saw in the previous example. Indeed, 
if a space satisfies the tetrahedral property at a point for radius $r$ then the diameter of the space is bigger than $r$. 
\end{rmrk}

\begin{cor}\label{cor-AlexNoUnif}
The class of $n$-dimensional Alexandrov spaces  with nonnegative curvature do not satisfy 
any uniform $n$-dimensional $(C,\beta)$-tetrahedral property for any radius $r >0$. 
Even if we require the diameter  of the spaces to be bigger than $r$ or if we require the spaces to be closed, i.e. to be complete and with no boundary.
\end{cor}

\begin{proof}
Let $r>0$.  If we do not require a diameter bound, then any Alexandrov space in this class with diameter less than $r$ does not satisfy the tetrahedral property at any point 
for radius $r$.  If we require the spaces to have diameter bigger than $r$, then  recall that in Theorem 10.2.3 of \cite{BurBurIva}  it is shown that for any complete Alexandrov space $(Y,d)$ with curvature $\geq 1$, the Euclidean cone $K(Y)$ is an Alexandrov space with curvature  $\geq 0$. Thus, by Example \ref{ex-conesModT} we see that all Euclidean cones $K(Y)$, with $\diam(Y) < \pi/3$ do not satisfy the tetrahedral property for radius $r$.
If the spaces are additionally required to be closed, we can choose appropriate thin flat tori such that the conclusion holds. We could also choose spaces $S$ homeomorphic to the sphere that look like ellipsoids, that is, like a long cylinder with two spherical caps, one at each end.  If we want to allow some negative curvature, one can construct dumbbells $(X,d)$ with thin necks and choose $p \in X$ such that  $S(p;r)$ is contained in the neck and $\diam(S(p;r),d)< r$. This will imply that $X$ does not satisfy the tetrahedral property at $p$  for radius $r$. 
\end{proof}

\begin{cor}
The class of $n$-dimensional Riemannian manifolds with curvature bounded below by $\kappa$ do not satisfy 
any uniform $n$-dimensional $(C,\beta)$-tetrahedral property for any radius $r$.  
Even if we require their diameter to be bigger than $r$ or if we require the spaces to be closed, i.e. to be complete and with no boundary.
\end{cor}

\begin{proof}
Except for $K(Y)$, all the spaces considered in the proof of  Corollary \ref{cor-AlexNoUnif} provide a proof of the statement. 
For a space similar to $K(Y)$, with nonnegative curvature, but which is a Riemannian manifold, we can consider $n$-dimensional paraboloids.  
\end{proof}

In general, the existence of \textit{topologically singular points}, in the sense of Alexandrov geometry, is not a priori an obstruction to the tetrahedral property as the following example shows. Let us recall that a point $p$ in an $n$-dimensional Alexandrov space is topologically singular if the space of directions at $p$ is not homeomorphic to an $(n-1)$-dimensional sphere. We refer the reader to \cite{BurBurIva}.  

\begin{example}\label{ex-projective} 
Let us consider $K(\mathbb{R}P^2)$ and let $o$ be the vertex. We claim that there exist $r>0$ and constants $C=C(o)>0$, $\beta=\beta(o)\in(0,1)$ such that $X$ satisfies the $3$-dimensional $(C,\beta)$-tetrahedral property  at $o$ for radius $r$.

There exists an isometric involution $\iota$ on a $3$-ball $B_R(0)\subset \mathbb{R}^3$, such that $B_R(0)/\iota$ is isometric to $K(\mathbb{R}P^2)$. The involution is given as the conification of the action induced by the antipodal map on $\mathbb{S}^2$. Recall that in Example 2.1 in \cite{Sor1} it was shown that $\R^3$ satisfies the $3$-dimensional $(C_{\R^3}(\beta), \beta)$-tetrahedral property for any radius $r>0$ and $\beta \in (0,1)$.  Let $r>0$ and  $p_1, p_2$ be the points coming from that example. It is clear that $p_i$ and the set  $S(o, p_1, p_2;r, s_1, s_2)$ are contained in a fundamental domain of the antipodal map action. Furthermore, the position of the $p_i$ and the radii $s_i$ can be perturbed so that 
\[
\pi\left(S(o, p_1, p_2;r, s_1, s_2)\right) = S(\pi(o), \pi(p_1), \pi(p_2);r, s_1,s_2).
\]
where $\pi:\mathbb{R}^3\to K(\mathbb{R}P^2)$ is the canonical projection.  Then the $3$-dimensional $(C_{\mathbb{R}^3}(\beta),\beta)$-tetrahedral property is satisfied for radius $r$ at $o$. 
\end{example}

\begin{lem}\label{lem-slice-cone-tetra} 
Let $(X,d)$ be a metric space with $\diam(X)\leq \pi$ satisfying the $n$-dimensional $(C,\beta)$-tetrahedral property for radius $r \leq \pi/2$. Let  $K(X)$ be the cone of $X$ with the cone distance $d_{K}$. Then for $s >0$, the slice $X\times\{s\}\subset K(X)$  with the restriction of $d_{K}$ satisfies the $n$-dimensional $(C_r,\beta_r)$-tetrahedral property for radius $r'$, where
\[
 C_r=\sqrt{\frac{1-\cos(Cr)}{1-\cos(r)}},
 \]
 \[
 0< \beta_r \leq \max\left\{ 1-\sqrt{\frac{1-\cos((1-\beta)r)}{1-\cos(r)}},  \sqrt{\frac{1-\cos((1+\beta)r)}{1-\cos(r)}}-1 \right\}
  \] and 
\[
r'= \sqrt{2s^2(1-\cos(r))}.
\]
\end{lem}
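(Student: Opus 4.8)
The plan is to transport the tetrahedral property of $(X,d)$ to the slice through the explicit form of the restricted cone metric. For $(x,s),(y,s)$ in $X\times\{s\}$ the restriction of $d_K$ is
\[
d_K((x,s),(y,s))=\sqrt{2s^2\bigl(1-\cos d(x,y)\bigr)}=s\,\phi\bigl(d(x,y)\bigr),\qquad \phi(u):=2\sin(u/2),
\]
so the slice metric is $d$ post-composed with the fixed function $s\phi$. Because $\diam X\le\pi$ and $\phi$ is a strictly increasing homeomorphism of $[0,\pi]$ onto $[0,2]$, composition with $s\phi$ carries $X$-distance $u$ to slice-distance $s\phi(u)$; in particular $r'=s\phi(r)$ and $S((p,s);r')=S(p;r)\times\{s\}$. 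I would fix $p\in X$, let $p_1,\dots,p_{n-1}\in\bar X$ be the witnesses furnished by the $(C,\beta)$-property of $X$ at $p$ for radius $r$, and use $(p_1,s),\dots,(p_{n-1},s)$ as witnesses in the slice. Since $\tfrac{2}{\pi}u\le\phi(u)\le u$ on $[0,\pi]$, the map $s\phi\circ d$ is bi-Lipschitz to $d$, so the completion of the slice is $\bar X\times\{s\}$ and the $(p_i,s)$ are legitimate; moreover $d_K((p,s),(p_i,s))=s\phi(r)=r'$, as required.

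The second step is the exact matching of parameters. The slice sphere $S((p_i,s);t_i')$ equals $S(p_i;t_i)\times\{s\}$ with $t_i:=\phi^{-1}(t_i'/s)$, i.e. $t_i'=s\phi(t_i)$; here $r\le\pi/2$ (so that $(1+\beta)r<\pi$) keeps the relevant values $t_i'/s$ inside $[0,2]=\phi([0,\pi])$, so $\phi^{-1}$ is defined. Consequently the intersection $S$ entering $h$ for the slice is the image of the corresponding intersection in $X$ under $x\mapsto(x,s)$; it has the same cardinality, and since $s\phi$ is continuous and increasing the infimum passes through it, giving
\[
h\bigl((p,s),r',t_1',\dots,t_{n-1}'\bigr)=s\,\phi\bigl(h(p,r,t_1,\dots,t_{n-1})\bigr).
\]
If every $t_i\in[(1-\beta)r,(1+\beta)r]$ then the property of $X$ gives $h(p,r,t_1,\dots,t_{n-1})\ge Cr$, and as $Cr\le\diam X\le\pi$ monotonicity of $\phi$ yields
\[
h\bigl((p,s),r',t_1',\dots,t_{n-1}'\bigr)\ge s\phi(Cr)=\frac{\phi(Cr)}{\phi(r)}\,r'=C_r\,r',\qquad C_r=\sqrt{\frac{1-\cos(Cr)}{1-\cos r}},
\]
which already produces the stated constant.

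The remaining and genuinely delicate step is to pick $\beta_r$ so that the symmetric slice window forces each $t_i$ into $X$'s symmetric window. Requiring $[(1-\beta_r)r',(1+\beta_r)r']$ to pull back under $t_i'=s\phi(t_i)$ into $[(1-\beta)r,(1+\beta)r]$ is the pair of one-sided inequalities $(1-\beta_r)\phi(r)\ge\phi((1-\beta)r)$ and $(1+\beta_r)\phi(r)\le\phi((1+\beta)r)$, that is $\beta_r$ bounded by
\[
1-\sqrt{\frac{1-\cos((1-\beta)r)}{1-\cos r}}\qquad\text{and}\qquad\sqrt{\frac{1-\cos((1+\beta)r)}{1-\cos r}}-1,
\]
the two expressions appearing in the statement. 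Here lies the main obstacle: because $\phi$ is strictly concave on $[0,\pi]$, a window symmetric about $r'$ does not pull back to one symmetric about $r$, so the two bounds differ, and the midpoint inequality $\phi((1-\beta)r)+\phi((1+\beta)r)<2\phi(r)$ shows the second (smaller) expression to be the operative one. With $\beta_r$ chosen in the admissible range, every $(t_1',\dots,t_{n-1}')\in[(1-\beta_r)r',(1+\beta_r)r']^{n-1}$ pulls back into $[(1-\beta)r,(1+\beta)r]^{n-1}$, the estimate above delivers $h\ge C_r r'$, and since $p\in X$ and $s>0$ were arbitrary while $C_r$ and $\beta_r$ are independent of $s$, the slice satisfies the $(C_r,\beta_r)$-tetrahedral property for radius $r'=\sqrt{2s^2(1-\cos r)}$ at every point.
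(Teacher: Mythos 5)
Your proposal is correct, and it follows the same route as the paper: transport the tetrahedral data of $(X,d)$ to the slice through the formula $d_K\bigl((x,s),(y,s)\bigr)=2s\sin\bigl(\tfrac12 d(x,y)\bigr)=s\phi\bigl(d(x,y)\bigr)$, use the lifted witnesses $(p_i,s)$, match spheres via $t_i'=s\phi(t_i)$, and extract the same constant $C_r=\phi(Cr)/\phi(r)=\sqrt{(1-\cos(Cr))/(1-\cos r)}$. Where you genuinely diverge from the paper is the final parameter-matching step, and there your treatment is the more careful one. You demand that the symmetric slice window $[(1-\beta_r)r',(1+\beta_r)r']$ be \emph{contained in} the image $[s\phi((1-\beta)r),\,s\phi((1+\beta)r)]$ of $X$'s window, which is the correct direction for verifying a ``for all $t_i'$ in the window'' statement; this forces $\beta_r\le\min\{A,B\}$ with $A=1-\sqrt{\tfrac{1-\cos((1-\beta)r)}{1-\cos r}}$ and $B=\sqrt{\tfrac{1-\cos((1+\beta)r)}{1-\cos r}}-1$, and by strict concavity of $\phi$ one has $B<A$, so the binding constraint is the second expression, exactly as you observe. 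The paper's statement instead allows $\beta_r\le\max\{A,B\}=A$, and its closing pair of inequalities, namely $s\phi((1-\beta)r)\ge(1-\beta_r)r'$ and $s\phi((1+\beta)r)\le(1+\beta_r)r'$, is written in the reversed direction: it asserts that the image window sits inside the symmetric one, which does not give the tetrahedral property for any $\beta_r\in(B,A]$, since then the symmetric window contains values $t_i'$ corresponding to $t_i>(1+\beta)r$ where the hypothesis on $X$ gives no control. In short, your version, with $\min$ in place of $\max$, is the statement the argument actually proves, and your proposal both reproduces the paper's construction and repairs its one flawed step; you also handle a point the paper glosses over, namely that the completion of the slice is $\bar X\times\{s\}$, so the lifted witnesses are legitimate.
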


\begin{proof}
We begin by considering a point $p\in X$ and noting that, since $X$ satisfies the $n$-dimensional $(C,\beta)$-tetrahedral property for radius $r$ at every point, there exist points $p_1,\ldots, p_{n-1}\in S(p;r)$ such that for $t_i\in [(1-\beta)r,(1+\beta)r]$,
\begin{equation}\label{eq-none}
S(p,p_1, ..., p_{n-1}; r, t_1,...,t_{n-1})\neq \emptyset.
\end{equation}

We claim that if $t_i\in [(1-\beta)r,(1+\beta)r]$, then for $t_i'=  \sqrt{2s^2(1-\cos(t_i))}$,
\begin{equation*}
S( (p,s),(p_1,s), ..., (p_{n-1},s); r', t'_1,...,t'_{n-1}) = S(p,p_1, ..., p_{n-1}; r, t_1,...,t_{n-1}) \times \{s\}.
\end{equation*}
We recall that the term on the left hand side of the equality stands for the intersection of the metric spheres 
$S((p,s); r')$, $S((p_i,s);  t'_i)$, $i=1,...n-1$,  contained in $K(X)$, while the term $S(p,p_1, ..., p_{n-1}; r, t_1,...,t_{n-1})$ appearing on the right hand side stands for the intersection of metric spheres $S(p; r')$, $S(p_i;  t_i)$, $i=1,...n-1$,  contained in $X$. 

In particular, from equation (\ref{eq-none}) and the claim it will follow that $$S( (p,s),(p_1,s), ..., (p_{n-1},s); r', t'_1,...,t'_{n-1})\neq \emptyset.$$
Since the cosine function is decreasing in $[0,\pi]$ and the square root is increasing in $[0,\infty)$ this would imply that for $(x,s)\neq(y,s)$ such that
$$(x,s),(y,s)\in S( (p,s),(p_1,s), ..., (p_{n-1},s); r', t'_1,...,t'_{n-1})$$
then   
\begin{eqnarray*}
d_{K}((x,s),(y,s))& = & \sqrt{2s^2\left(1-\cos(d(x,y)) \right)} \\ 
& \geq & \sqrt{2s^2(1-\cos(Cr))}= C_{r}r'.
\end{eqnarray*}

\bigskip 
To prove the claim, take $x\in S(p,p_1, ..., p_{n-1}; r, t_1,...,t_{n-1})$. Then, 
\begin{equation}\label{eqdKp}
d_{K}((p,s),(x,s))= \sqrt{2s^2-2s^2\cos(d(p,x))}=r'
\end{equation}
\begin{eqnarray}\label{eqdKpi}
d_{K}((p_i,s),(x,s)) & = & \sqrt{2s^2(1-\cos(d(p_i,x)))} \\ 
& = & \sqrt{2s^2(1-\cos(t_i))}. 
\end{eqnarray}
Note that since $t_i\in [(1-\beta)r,(1+\beta)r]$, 
\[
\sqrt{2s^2(1-\cos((1-\beta)r))}\leq t'_i \leq \sqrt{2s^2(1-\cos((1+\beta)r))}.    
\]

From (\ref{eqdKp}) and (\ref{eqdKpi}),
\[
S( (p,s),(p_1,s), ..., (p_{n-1},s); r', t'_1,...,t'_{n-1}) \supset   S(p,p_1, ..., p_{n-1}; r, t_1,...,t_{n-1}) \times \{s\}. 
\] 
The proof of the claim concludes noticing that the map $\Phi: [(1-\beta)r, (1+\beta)r] \to  \R$  given by 
\[
\Phi(t'):= \sqrt{2s^2(1-\cos(t))}.
\]
is a bijection onto its image. 

The lemma follows from the following calculations:  
We know that for $t_i\in [(1-\beta)r,(1+\beta)r]$, 
\[
\sqrt{2s^2(1-\cos((1-\beta)r))}\leq t'_i \leq \sqrt{2s^2(1-\cos((1+\beta)r))}.    
\]
From the definition of $\beta_r$ and $r'$,
\[
\sqrt{2s^2(1-\cos((1-\beta)r))} =\sqrt{\frac{1-\cos((1-\beta)r)}{1-\cos(r)}} \,r'   \geq (1- \beta_r) \,r' .
\]
and 
\[
 \sqrt{2s^2(1-\cos((1+\beta)r))} = \sqrt{\frac{1-\cos((1+\beta)r)}{1-\cos(r)}} \, r' \leq (1+ \beta_r) \,r' .
\]
%
%
\end{proof}

We stress that the previous lemma states that for each $s>0$, the slice $X\times\{s\}\subset K(X)$  with the restriction of $d_{K}$ satisfies the $n$-dimensional $(C,\beta)$-tetrahedral property for some radius,  but we do not claim that  $K(X)$ satisfies the $(n+1)$-dimensional $(C,\beta)$-tetrahedral property.  

\bigskip
It was noted in \cite{PorSor} that there is no uniform tetrahedral property on manifolds with positive scalar curvature even when the volume of the balls is uniformly bounded below by the volume of Euclidean balls. It was conjectured that one might have the $(C, 1/2)$-tetrahedral property or an integral version on manifolds with uniform sectional curvature lower bounds (and even with uniform Ricci curvature lower bounds) and a uniform volume lower bound.  It was observed that two of the difficulties arising when trying to prove that a metric space satisfies the tetrahedral property at a point $p$ for a given radius $r>0$ are the following; even if one knows that $h(p, r, ..., r) > Cr$ the triangle inequality does not ensure that  $h(p, r, t_1, ..., t_{n-1}) \approx Cr$ for $t_i$ close to $r$, nor that $S(p, p_1,...,p_{n-1}; r, t_1 , ..., t_{n-1})=\emptyset$. In the former case and even for Riemannian manifolds, there is the possibility that there exist points $x_1,x_2, y_1,y_2  \in S(p,p_1,..., p_{n-1};r, t_1, ..., t_{n-1})$ such that $d(x_i,y_j) \approx Cr$, $i,j=1,2$, but $d(x_1,x_2), d(y_1,y_2) \approx 0$. In the following remarks we consider Alexandrov spaces and go over two particular approaches to prove that the space satisfies the tetrahedral property at a point for some radius. In the first one we are unable to choose points $p_i$, $i=1,...,n-1$, and in the second we cannot  rule out the existence of points $x_i,y_i$ such that $d(x_1,x_2), d(y_1,y_2) \approx 0$.

\begin{rmrk}\label{rmrk-AlexSpTprop} 
It is known that if $(X,d)$ is an $n$-dimensional Alexandrov space and $\{p_i, q_i\}_{i=1}^{n}$ an $(n,\delta)$-strainer for $p \in X$ with $\delta \leq \tfrac 1{100n}$ then there exists an open set $U \subset X$ such that $p\in U$ and, the function $f:  X  \to \R$ given by $f(x)= (d(x,p_1), ..., d(x,p_n))$ for any $x \in X$ is a bilipschitz function into its image when restricted to $U$ with Lipschitz constant bounded by $\sqrt{n}$ and the Lipschitz constant of its inverse bounded by $500n$.  See  Proposition 10.8.15 in \cite{BurBurIva}.  Knowing that $f|_U: U  \to f(U)$ is bilipschitz suggests we could try to show that if $d(p,p_i)=r$, $i=1,...,n-1$, the set $S(p,p_1,...,p_{n-1} ;  r, t_1,...,t_{n-1})$ and the function $h(p,r,t_1,\ldots, t_{n-1})$
behave in a suitable way so that $(X,d)$ satisfies the tetrahedral property at $p$ for radius $r$ for appropriately chosen $0 <\beta <1$. In spite of that, in order to prove that $f|_U: U \to f(U)$ is bilipschitz one requires that the distance from each $p_i$ to $U$ is sufficiently big.  Hence, $S(p; r) \cap U = \emptyset$ and in particular
$S(p,p_1,...,p_{n-1} ;  r, t_1,...,t_{n-1}) \subset S(p; r)$ does not intersect $U$. Thus, one is unable to use the bilipschitz map.  
\end{rmrk}

\begin{rmrk}\label{rmrk-AlexSpTprop2}
Let $(X,d)$ be an $n$-dimensional Alexandrov space and $p\in X$. Assume that there exists an $(n,\delta)$-strainer $\{p_i, q_i\}_{i=1}^{n}$ such that there is an open set $U \subset X$ with $p\in U$ and a bilipschitz map $f|_{U}: U \to  f(U) \subset \R^n$ with
$$
(1+\vare)^{-1} d(x,z) \leq ||f(x) - f(z) || \leq (1+\vare)d(x,z),
$$
where $f(x)= (d(x,p_1), ..., d(x,p_n))$ for any $x \in X$. That is, we can assume that $p$ is a regular point of $X$.  Let's attempt to prove that $(X,d)$ satisfies the $n$-dimensional $(C, \beta)$-tetrahedral property at $p$ for some radius $r$.  Assume without loss of generality that $f(p)=0$ and let $r>0$ be small enough so that $B_{2r}(p)\subset U$. Since the $n$-dimensional Euclidean space satisfies the $n$-dimensional $(C_{\R^n}(\beta), \beta)$-tetrahedral property at $0$ for any radii $r>0$ and any $\beta \in (0,1)$, there exist points $a_1,...,a_{n-1} \in \R^n$,  $||a_i||=r$, as the ones appearing in Definition \ref{def-Tprop}.  Then for $\vare>0$ sufficiently small and using the fact that $f|_U$ is $(1+\vare)$-bilipschitz into its image we can find points $p_i \in X$ such that $d(p,p_i)=r$ and $||a_i -  f(p_i)||$ are close, $i=1,...,n-1$. Then one can show that $f(S(p,p_1,..., p_{n-1};r,t_1,...,t_{n-1}))$ is Hausdorff close to $S(0,a_1,..., a_{n-1};r,t_1,...,t_{n-1})$. The closeness of the points and the Hausdorff closeness can be given in terms of $\vare$.  The Hausdorff closeness ensures  
the existence of at least two points in $S(p,p_1,..., p_{n-1};r,t_1,...,t_{n-1})$, as in the Euclidean case, whose distances are bounded below by a number close to $C_{\R^n}(\beta) r$. But we are unable to rule out the possibility of more points appearing.  If one tries to overcome the problem by taking $\vare\to 0$ in order to get bilipschitz functions $f_\vare:  U_\vare  \to  f_\vare(U_\vare)$ with constants approaching $1$, then the previous sets are closer to each other in Hausdorff distance. Though, $\diam(U_\vare)\to 0$ and we would have a sequence of radii $r_\vare  \to 0$.  We conjecture that if the balls in $U$ are convex the present issue could be solved. 
\end{rmrk}

\section{$(C,\alpha,\beta)$-Tetrahedral Property}\label{sec-modif}

In this section we recall the definition of the $(C,\alpha,\beta)$-tetrahedral property, Definition \ref{def-modTprop}, present the definition of the  $(C,\alpha,\beta)$-integral tetrahedral property, provide examples of spaces that satisfy these properties and prove that the main results of the $(C,\beta)$-tetrahedral property such as those related to the Gromov-Hausdorff and intrinsic flat convergences hold,
Theorems  \ref{tetra-manifold}-\ref{tetra-compactness4}.

In Subsection \ref{ssec-abTp} we see that the $(C,\beta)$-tetrahedral property implies the $(C,1-\beta,1+\beta)$-tetrahedral property, Remark \ref{rmrk-modTimpT}. Then we analyze some examples presented above. 
These examples also evidence the improvement over the $(C,\beta)$-tetrahedral property: The $(C,\alpha,\beta)$-tetrahedral property may be satisfied in spaces in which the $(C,\beta)$-tetrahedral property fails.   In  Example \ref{ex-planes2} we present a metric space $(X,d)$ that satisfies the $(C, \beta)$-tetrahedral property at $p$ for $r>0$ and $\beta <c(p,r)$, where $c(p,r)$ denotes a function that depends on $p$ and $r$. Hence, $(X,d)$ satisfies the $(C, 1-\beta, 1+\beta)$-tetrahedral property. Moreover, $(X,d)$ satisfies the $(C, \alpha, \beta)$ property for $c(p,r) < \alpha < \beta <2$.  In Example \ref{ex-plane&Line2} we show a metric space $(X,d)$ that satisfies the $(C, \alpha, \beta)$-tetrahedral  property at some points $p$ only for $r > ||p||$. Meanwhile, it satisfies the $(C, \beta)$-tetrahedral  property at those points only for $r > 2||p||$.   Recall that Example \ref{ex-conesModT} deals with the Euclidean cone of spaces of small diameter. We showed that these cones do not satisfy the $(C,\beta)$-tetrahedral property at their vertex.  In  Example \ref{ex-conesModT2} we see that the Euclidean cone of spheres with small diameter do satisfy the $(C, \alpha, \beta)$-tetrahedral property for adequate choices of the parameters.  We comment that it is not straightforward to prove that 
Alexandrov spaces satisfy the $(C, \alpha, \beta)$-tetrahedral property at a point for certain radius by considering strainers
since Remarks  \ref{rmrk-AlexSpTprop}- \ref{rmrk-AlexSpTprop2}  also apply in this case.

 In Subsection \ref{ssec-abiTp} we define the $(C,\alpha,\beta)$-integral tetrahedral property, provide an example and show that our generalized  $(C,\alpha,\beta)$-integral tetrahedral property is equivalent to the $(C',\beta')$-integral tetrahedral property, Proposition \ref{prop-EquivIntT}.  We recall that Portegies-Sormani  proved their tetrahedral results using their integral tetrahedral property and that our proofs 
are almost identical to theirs.

For simplicity, in Subsection \ref{ssec-vol}, we stated Portegies-Sormani's volume estimate, Theorem \ref{thm-vol},
for Riemannian manifolds satisfying the $(C,\beta)$-tetrahedral property. We remark that this result also holds for integral current spaces  (Theorem 3.39  in \cite{PorSor}).
In Subsection \ref{ssec-mass} we prove the analogous volume estimate for integral current spaces satisfying the $(C,\alpha, \beta)$-integral tetrahedral property;  Theorem \ref{mass-ball}.  As a corollary we get Theorem \ref{tetra-manifold}.

In Subsection \ref{ssec-convergence} we prove convergence results for integral current spaces satisfying the $(C,\alpha, \beta)$-integral tetrahedral property;  Theorem \ref{tetra-compactness11} and Theorem \ref{tetra-compactness3}.  As corollaries we get the convergence theorems for manifolds stated in the introduction, that is, Theorem \ref{tetra-compactness2}  and  Theorem \ref{tetra-compactness4}. These theorems for manifolds are analogous to Portegies-Sormani's results,  Theorem \ref{compGH} and Theorem \ref{compGH=IF}.  We do not claim any originality in the proofs presented in these two last subsections. They follow immediately from Portegies-Sormani's proofs. Though we stated the results keeping track of all the hypotheses needed and state the convergence theorems not just for manifolds.


\subsection{$(C,\alpha,\beta)$-Tetrahedral Property and Examples}\label{ssec-abTp}

Let $(X,d )$ be a metric space, 
recall that $S(p;r)=\{ x \in X | \,d(x,p)=r \}$ and that 
$$S(x_1,\dots,x_j;t_1,\dots,t_j) = \bigcap_{i=1}^j   S(x_i;t_i).$$
Furthermore, the metric completion of $X$ is denoted by $\bar X$ and the cardinality of a set $S$ by $|S|$.

\begin{definition}[$(C,\alpha,\beta)$-tetrahedral property]
Let $C>0$ and $\alpha, \beta\in (0,2)$, $\alpha < \beta$.  A metric space $(X,d)$ satisfies the $n$-dimensional $(C,\alpha,\beta)$-tetrahedral property at a point $p$ for radius $r$ if there exist points $p_1,..., p_{n-1}\in \bar{X}$ such that $d(p,p_i)=r$ and for all $(t_1,..., t_{n-1})\in [\alpha r,\beta r]^{n-1}$ the following holds  
\[ 
h(p,r,t_1,\ldots, t_{n-1})\geq Cr,
\]
where 
\begin{equation}
h(p,r,t_1,\ldots, t_{n-1})=\left\{
\begin{array}{ll}
      \inf\{ \, d(x,y) \mid x\neq y, x,y\in S \}  & |S| \geq 2 \\
      0 & \text{otherwise} \\
\end{array} 
\right.
\end{equation}
and $S= S(p,p_1,\dots,p_{n-1};r,t_1,\dots,t_{n-1})$.

We say that $X$ \textit{satisfies the $n$-dimensional $(C,\alpha,\beta)$-tetrahedral property for radius r} if it satisfies the  $n$-dimensional $(C,\alpha, \beta)$-tetrahedral property at every point for radius $r$. 
\end{definition}

\begin{rmrk}\label{rmrk-modTimpT}
Let $(X,d)$ be a metric space that satisfies the $n$-dimensional $(C,\beta)$-tetrahedral property at $p$ for radius $r$. By definition, $(X,d)$ satisfies the $n$-dimensional $(C, 1-\beta, 1+\beta)$-tetrahedral property at $p$ for radius $r$.  
If $(X,d)$ satisfies the $n$-dimensional $(C, \alpha, \beta)$-tetrahedral property at $p$ for radius $r$ then it satisfies 
the $n$-dimensional $(C, \min\{1-\alpha, \beta - 1\})$-tetrahedral property at $p$ for radius $r$ only if $\alpha < 1 < \beta$.
\end{rmrk}

\begin{example}\label{ex-planes2} 
Recall Example \ref{ex-planes} where $X\subset \R^3$ equals the union of the $xy$-plane and the upper part of the $yz$-plane with the induced intrinsic distance, $d$.  
We showed that $(X,d)$ satisfies the $2$-dimensional $(C_{\R^2}(\beta), \beta)$-tetrahedral property at $p \in X$ for radius $r \leq \max\{\dist(p,xy-\text{plane}), \dist(p,yz-\text{plane})\}$ for any $0 <  \beta < 1$.  We also showed that  $(X,d)$ satisfies the $2$-dimensional $(C_{\R^2}(\beta), \beta)$-tetrahedral property at $p \in X$ for radius $r  > \max\{\dist(p,xy-\text{plane}), \dist(p,yz-\text{plane})\}$ for $0  < \beta < \sqrt{2r^2 + 2r|x|} /r - 1$.  

With the new definition, $(X,d)$ satisfies the $2$-dimensional $(C, \alpha, \beta)$-te\-tra\-he\-dral property at $p \in X$ for radius $r \leq \max\{\dist(p,xy-\text{plane}), \dist(p,yz-\text{plane})\}$ for any $0 < \alpha < \beta < 2$ and, at $p \in X$ for radius $r  > \max\{\dist(p,xy-\text{plane}), \dist(p,yz-\text{plane}\}$ for $0  < \alpha < \beta < \sqrt{2r^2 + 2r|x|} /r $ and $\sqrt{2r^2 + 2r|x|} /r < \alpha < \beta < 2$. 
\end{example}

\begin{example}\label{ex-plane&Line2}
Recall Example \ref{ex-plane&Line} in which $X\subset \R^3$ consisted of the union of the $xy$-plane and the nonnegative part of the $z$-axis with the induced intrinsic distance, $d$.  We showed that $X$ satisfies the $2$-dimensional $(C_{\R^2}(\beta),\beta)$-tetrahedral property at $p \in X$ contained in the $xy$-plane for all $r > 0$. We also showed that $X$ satisfies the $2$-dimensional $(C_{\R^2}(\beta),\beta)$-tetrahedral property at $p$ on the positive part of the $z$-axis  only for $r > 2||p||$, where $\beta \in (0, 1-2||p||/r)$. Here we prove that $X$ satisfies the $2$-dimensional $(C, \alpha,\beta)$-tetrahedral property at points $p$ on the positive part of the $z$-axis for $r  \in (||p||, 2||p||]$ but not for $r  \in (0, ||p||]$.

If $p$ is in the positive part of the $z$-axis,  take $r  \in (||p||, 2||p||]$ and
pick $p_1$ in the $xy$-plane such that $||p_1||=r-||p||$. Note that  $S(p;r)$ equals the circle of radius $r-||p||$ around $0$ in the $xy$-plane union the point $z=p+r(0,0,1)$ on the $z$-axis. Hence, when $t_1 \in (0,2(r-||p||))$  $S(p_1;t_1)$ intersects $S(p;r)$ in exactly two points in the $xy$-plane. For $t_1 \in (0,2(r-||p||))$, $z \notin S(p_1;t_1)$ since $d(z,p_1)=||z||+ ||p_1|| = 2r >  2(r-||p||)$ and $r  \in (||p||, 2||p||]$ implies $2(r-||p||) \leq r$.  Thus, $(X,d)$ satisfies the $2$-dimensional $(C,\alpha, \beta)$-tetrahedral property at $p$ for $r \in (||p||, 2||p||] $ and $0 < \alpha < \beta < 2(r-||p||)/r$. 

If $p$ is in the positive part of the $z$-axis and  $r \leq ||p||$ then  $S(p;r)$ contains only two points. Then for $p_1 \in S(p;r)$ the cardinality of $S(p,p_1;r,t)$ is less than or equal to 1. Hence,  $(X,d)$ cannot satisfy the $2$-dimensional $(C, \alpha,\beta)$-tetrahedral property at those points with that $r$.
\end{example}

\begin{example}\label{ex-conesModT2}
 Let $\mathbb{S}^2(r)$ be the sphere of radius $r\leq 1$ (so that its diameter is less than or equal to $\pi$) and consider $K(\mathbb{S}^2(r))$ with the cone metric. As pointed out in Example \ref{ex-conesModT},  if $r \leq 1/3$ then the cone $K(\mathbb{S}^2(r))$ does not satisfy the $3$-dimensional $(C,\beta)$-tetrahedral property at its vertex for any radius. We show that it does satisfy the $3$-dimensional $(C,\alpha, \beta)$-tetrahedral property at its vertex, $o$, for any $t>0$ and some $\alpha < \beta$ and $C$. 
 
Recall that $S(o;t)=\mathbb{S}^2(r) \times \{t\}$.  Let $p_1,p_2 \in \mathbb{S}^2(r)$ be two distinct points. 
From the formula $d_K((p,t), (x,t))= 2t\sin(\tfrac12 d(p,x))$  we see that for $t_i' =  2 \arcsin (\tfrac{t_i}{2t})$ the following holds 
\begin{equation*}
S(o, (p_1,t), (p_2,t);t,t_1, t_2) = S(p_1,p_2; t'_1,t'_2) \times \{t\}.
\end{equation*}

It can be seen that the set $S(p_1,p_2; t'_1,t'_2)$ has cardinality two when 
$$t'_1,t'_2 \in [\alpha', \beta'] \subset (\tfrac 12 d(p_1,p_2), \min\{ \tfrac 32 d(p_1,p_2),   \pi r - \tfrac 12d(p_1,p_2)\}).$$
Furthermore, the function $(t'_1,t'_2) \in [\alpha',\beta']^2 \mapsto d(x ,y)$, where $x \neq y \in S(p_1,p_2; t'_1,t'_2)$, 
attains its minimum, $d(x_m,y_m)$, at one of the points $$\{(\alpha',\alpha'),(\alpha',\beta'),(\beta',\alpha'),(\beta',\beta')\}.$$  
Hence, from the formula $d_K((p,t), (x,t))= 2t\sin(\tfrac12 d(p,x))$, if
\begin{equation*}
\alpha >   2\sin( \tfrac 14 d(p_1,p_2) )   ,
\end{equation*} 
\begin{equation*}
\beta <  2  \min\{   \sin(\tfrac 34 d(p_1,p_2) )  ,   \sin( \tfrac12 ( \pi r -  \tfrac 12 d(p_1,p_2))) \}
\end{equation*} 
and 
\begin{equation*}
 C= 2 \sin (\tfrac 12 d(x_m,y_m)), 
\end{equation*} 
where $(x_m,y_m)$ is the minimum of the function $(t'_1,t'_2) \in [\alpha',\beta']^2 \mapsto d(x ,y)$ with $\alpha'= 2\arcsin(\alpha/2)$
and $\beta'= 2\arcsin(\beta/2)$, then 
the metric space $K(\mathbb{S}^2(r))$ satisfies the $3$-dimensional $(C,\alpha, \beta)$-tetrahedral property at  $o$ for radius $t$
with points $(p_1,t)$ and $(p_2,t)$.   
\end{example}

\subsection{$(C,\alpha, \beta)$-integral tetrahedral property}\label{ssec-abiTp}

The $n$-dimensional $(C,\alpha, \beta)$-integral tetrahedral property also has an integral version (cf. Definition \ref{def-integralCb}).

\begin{definition}[$(C,\alpha,\beta)$-integral tetrahedral property] \label{defn-intTetra}
Let $C>0$ and $\alpha, \beta\in (0,2)$, $\alpha < \beta$.  A metric space $(X,d)$ satisfies the $n$-dimensional $(C,\alpha,\beta)$-integral tetrahedral property at a point $p$ for radius $r$ if there exist points $p_1,\ldots, p_{n-1}\in \bar{X}$ such that $d(p,p_i)=r$ and for all $(t_1,\ldots, t_{n-1})\in [\alpha r,\beta r]^{n-1}$ the following estimate holds  
\[ 
\int_{t_1=\alpha r}^{\beta r} \cdots \int_{t_{n-1}=\alpha r}^{\beta r }
h(p,r,t_1,...t_{n-1})
\, dt_1dt_2...dt_{n-1}
\geq C (\beta- \alpha)^{n-1}r^n.
\]

We say that $X$ \textit{satisfies the $n$-dimensional $(C,\alpha,\beta)$-integral tetrahedral property for radius r} if it satisfies the  $n$-dimensional $(C,\alpha, \beta)$-integral tetrahedral property at every point for radius $r$. 
\end{definition}

Portegies-Sormani proved that the tetrahedral property implies the integral tetrahedral property. We prove a similar statement.

\begin{prop}[c.f. Proposition 3.37 in \cite{PorSor}]
If $(X,d)$ is a metric space that satisfies the $n$-dimensional $(C,\alpha,\beta)$-tetrahedral property at a point $p$ for radius $r$ 
then it also satisfies the $n$-dimensional  $(C,\alpha,\beta)$-integral tetrahedral property at $p$ for radius $r$.
\end{prop}

\begin{proof}
It follows immediately from the definitions:
\begin{eqnarray*}
\int_{t_1=\alpha r}^{\beta r} \cdots \int_{t_{n-1}=\alpha r}^{\beta r }
&h(p,r,t_1,...t_{n-1})&
\, dt_1dt_2...dt_{n-1} \,\,\,\ge\\
&\ge& \int_{t_1=\alpha r}^{\beta r} \cdots \int_{t_{n-1}=\alpha r}^{\beta r }
C r
\, dt_1dt_2...dt_{n-1}\\
&=& C( \beta - \alpha)^{n-1}r^n .
\end{eqnarray*}
\end{proof}

\begin{rmrk}\label{rmrk-modIntTimpT}
Let $(X,d)$ be a metric space that satisfies the $n$-dimensional $(C,\beta)$-integral tetrahedral property at $p$ for radius $r$. Then it satisfies the $n$-dimensional $(C, 1-\beta, 1+\beta)$-integral tetrahedral property at $p$ for radius $r$.  If $(X,d)$ satisfies the $n$-dimensional $(C, \alpha, \beta)$-tetrahedral property at $p$ for radius $r$ then it satisfies the $n$-dimensional $(C, \min\{1-\alpha, \beta - 1\})$-integral tetrahedral property at $p$ for radius $r$ if $\alpha < 1 < \beta$.  
\end{rmrk}

\begin{prop}\label{prop-EquivIntT}
Let $(X,d)$ be a metric space that satisfies the $n$-dimensional $(C, \alpha,\beta)$-integral tetrahedral property at $p$ for radius $r$. Let $\beta'=  \max\{1-\alpha, \beta - 1\}$. 
Then $(X,d)$ satisfies the $n$-dimensional $(C', \beta')$-integral tetrahedral property at $p$ for radius $r$.
\end{prop}

\begin{proof}
Note that $\beta' \in (0,1)$ and that $1-\beta' \leq \alpha < \beta \leq 1+\beta'$. Now, 
\begin{eqnarray*}
& \int_{t_1= (1-\beta')r}^{(1+ \beta' )r} \cdots \int_{t_{n-1} = (1-\beta') r}^{ (1+ \beta') r }
h(p,r,t_1,...t_{n-1})
\, dt_1dt_2...dt_{n-1}  \\
& \ge \int_{t_1=\alpha r}^{\beta r} \cdots \int_{t_{n-1}=\alpha r}^{\beta r }
h(p,r,t_1,...t_{n-1})
\, dt_1dt_2...dt_{n-1} \\
& = C( \beta - \alpha)^{n-1}r^n =  \tfrac {C( \beta - \alpha)^{n-1} }  { (2 \beta')^{n-1}}    (2 \beta')^{n-1} r^n.
\end{eqnarray*}
We conclude that $(X,d)$ satisfies the $n$-dimensional $(C', \beta')$-integral tetrahedral property at $p$ for radius $r$.
\end{proof}


\begin{example}
Proceeding as in Example \ref{ex-integralCb} it follows that the space given in 
Example \ref{ex-planes2}  satisfies the $2$-dimensional $(C, \alpha, \beta)$-integral te\-tra\-he\-dral property at $p \in X$ for any radius $r \leq \max\{\dist(p,xy-\text{plane}), \dist(p,yz-\text{plane})\}$, where $C$ is a constant that depends on $\alpha$ and $\beta$.
\end{example}


\subsection{Masses of Balls}\label{ssec-mass}

Now we  will deal with integral current spaces. We recommend the reader to check Subsection \ref{ssec-currents}
for a quick introduction to the subject and \cite{AK}, \cite{PorSor} and \cite{SW} for a complete treatment. 
In this section we prove Theorem  \ref{tetra-manifold} and the equivalent result for integral current spaces, Theorem \ref{mass-ball}. 

\begin{thm}[cf. Theorem 3.38 in \cite{PorSor}]\label{tetra-ball} 
Suppose $(X,d,T)$ is an $n$-dimensional integral current space and  $p\in X$ such that
$\bar{B}_R(p) \cap \set{\partial T}=\emptyset$.  Then for almost
every $r\in (0,R)$, if  $(\bar{X},d)$ satisfies the
$n$-dimensional 
$(C,\alpha,\beta)$-integral tetrahedral property at $p$
for radius $r$ 
then
\begin{equation}
\mass(S(p,r))\geq \SF_{n-1}(p,r) \geq C (\beta- \alpha)^{n-1}r^n.
\end{equation}
\end{thm}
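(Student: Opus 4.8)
The statement to prove, Theorem~\ref{tetra-ball}, chains two inequalities: a lower bound of the mass $\mass(S(p,r))$ by the $(n-1)$-th sliced filling $\SF_{n-1}(p,r)$, and a lower bound of $\SF_{n-1}(p,r)$ by $C(\beta-\alpha)^{n-1}r^n$. The first inequality is not something I would reprove from scratch: it is exactly the mass-versus-sliced-filling estimate of Portegies--Sormani, Theorem~\ref{dist-set-2}, which already gives $\mass(S(p,r))\ge \SF(p,r,p_1,\dots,p_{n-1})$ for almost every $r$, provided $\bar B_R(p)\cap\set(\partial T)=\emptyset$. Taking the supremum over admissible choices of the $q_i$ on $S(p;r)$ upgrades this to $\mass(S(p,r))\ge \SF_{n-1}(p,r)$. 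So the real work is the second inequality, and my plan is to extract it from the integral lower bound already present in Theorem~\ref{dist-set-2}.

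\textbf{First step.} I would invoke the hypothesis: $(\bar X,d)$ satisfies the $(C,\alpha,\beta)$-integral tetrahedral property at $p$ for radius $r$, so there exist points $p_1,\dots,p_{n-1}\in\bar X$ with $d(p,p_i)=r$ realizing
\[
\int_{\alpha r}^{\beta r}\cdots\int_{\alpha r}^{\beta r} h(p,r,t_1,\dots,t_{n-1})\,dt_1\cdots dt_{n-1}\ge C(\beta-\alpha)^{n-1}r^n.
\]
The second inequality of Theorem~\ref{dist-set-2}, with $s_i=d(p_i,p)=r$, bounds $\SF(p,r,p_1,\dots,p_{n-1})$ below by the integral of $h$ over the box $\prod_i[s_i-r,s_i+r]=\prod_i[0,2r]$. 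Since $\alpha,\beta\in(0,2)$, the box $[\alpha r,\beta r]^{n-1}$ is contained in $[0,2r]^{n-1}$, and $h\ge 0$ everywhere by its definition, so restricting the domain of integration only decreases the integral. Hence
\[
\SF(p,r,p_1,\dots,p_{n-1})\ge\int_{0}^{2r}\!\!\cdots\int_{0}^{2r} h\,dt\ge\int_{\alpha r}^{\beta r}\!\!\cdots\int_{\alpha r}^{\beta r} h\,dt\ge C(\beta-\alpha)^{n-1}r^n.
\]

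\textbf{Second step.} Because $\SF_{n-1}(p,r)$ is the supremum of $\SF(p,r,q_1,\dots,q_{n-1})$ over all $(n-1)$-tuples with $d(p,q_i)=r$, and my chosen $p_1,\dots,p_{n-1}$ are one admissible tuple, I get $\SF_{n-1}(p,r)\ge\SF(p,r,p_1,\dots,p_{n-1})\ge C(\beta-\alpha)^{n-1}r^n$. Combining with the first inequality from Theorem~\ref{dist-set-2} closes the chain for almost every $r\in(0,R)$.

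\textbf{Main obstacle.} The two genuine points of care are matching conventions and the ``almost every $r$'' qualifier. First, I must verify that the $p_i$ supplied by the integral tetrahedral property lie in $\bar X$ with $d(p,p_i)=r$ in the sense required by Theorem~\ref{dist-set-2}, and that the definitions of $h$ and of $S$ coincide between Definition~\ref{defn-intTetra} and Theorem~\ref{dist-set-2} (they do, once one sets $s_i=r$). Second, Theorem~\ref{dist-set-2} only holds for almost every $r$ and presumes $S(p,r)$ is an integral current space; both are guaranteed for a.e.\ $r$ by Lemma~3.1 of \cite{PorSor}, which is exactly the hypothesis I am allowed to assume. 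Thus there is no deep analytic difficulty here: the entire content of Theorem~\ref{tetra-ball} is a bookkeeping reduction to Theorem~\ref{dist-set-2}, with the only substantive observation being that the interval inclusion $[\alpha r,\beta r]\subset[0,2r]$, valid precisely because $\alpha,\beta\in(0,2)$, lets the box in the integral tetrahedral hypothesis sit inside the integration box produced by Theorem~\ref{dist-set-2}.
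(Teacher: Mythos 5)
Your proposal is correct and follows essentially the same route as the paper's proof: both reduce everything to Theorem~\ref{dist-set-2}, obtain $\mass(S(p,r))\geq \SF_{n-1}(p,r)$ by taking the supremum over admissible tuples, and then bound $\SF(p,r,p_1,\dots,p_{n-1})$ below using the points from the integral tetrahedral property together with the inclusion $[\alpha r,\beta r]\subset[0,2r]$ and the nonnegativity of $h$. The only difference is cosmetic: you make explicit the monotonicity-of-the-integral step that the paper leaves implicit.
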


\begin{proof}
Let $q_1,...,q_{n-1} \in X$, by Portegies-Sormani's Theorem \ref{dist-set-2} we know that 
\begin{align}\label{eq-SmassS}
\mass (S(p,r))\ge &\SF(p,r,q_1,...,q_{n-1})\\ \nonumber
\geq &\int_{t_1=d(p,q_1)-r}^{d(p,q_1)+r} \cdots \int_{t_{n-1}=d(p,q_{n-1})-r}^{d(p,q_{n-1})+r}
h(p,r,t_1,...t_{n-1})
\, dt_1dt_2...dt_{n-1}.
\end{align}
To get the first inequality recall that 
\begin{equation*}
\SF_{n-1}(p,r) = \sup\{\SF(p,r,q_1,...,q_{n-1}) \,|\, d(p,q_i)=r \}.
\end{equation*}
Since $S(p,r)$ does not 
depend on $q_1,...,q_{n-1}$,  we obtain from the previous series of inequalities 
\[
\mass (S(p,r)) \ge  \SF_{n-1}(p,r) .
\]

For the second inequality notice that since $(\bar{X},d)$ satisfies the
$n$-dimensional 
$(C,\alpha,\beta)$-integral tetrahedral property at $p$ for radius $r$, 
there exist  $p_1,...,p_{n-1} \in \bar X$ such that  
\[ 
\int_{t_1=\alpha r}^{\beta r} \cdots \int_{t_{n-1}=\alpha r}^{\beta r }
h(p,r,t_1,...t_{n-1})
\, dt_1dt_2...dt_{n-1}
\geq C (\beta- \alpha)^{n-1}r^n.
\]
Now $0 <\alpha< \beta <2$ implies that  $d(p,q_{i})-r < \alpha r < \beta r < d(p,q_{i})+r$. Thus, recalling equation (\ref{eq-SmassS}) we obtain
\[
\SF(p,r,p_1,...,p_{n-1})\\
\geq C (\beta- \alpha)^{n-1}r^n.
\]
\end{proof}

An immediate consequence of Theorem~\ref{tetra-ball} is Theorem \ref{tetra-manifold}.

\begin{proof}[Proof of Theorem \ref{tetra-manifold}]
It follows from the previous theorem since in this case $\mass(S(p,r))= \vol(B_r(p))$ (see Lemma 3.2 in \cite{PorSor}, cf. Section  \ref{ssec-currents}).
\end{proof}

\begin{thm}[cf. Theorem 3.42 in \cite{PorSor}]\label{mass-ball} 
Suppose $(X,d,T)$ is an $n$-dimensional integral current space and  $p\in X$ such that
$\bar{B}_R(p) \cap \set{\partial T}= \emptyset$.   If  $(\bar{X},d)$ satisfies the
$n$-dimensional $(C,\alpha,\beta)$-integral tetrahedral property at $p$
for all radii $r$,  $r \leq r_0$,  then for any
$r\in (0, \min\{r_0, R \})$
\begin{equation}
||T||(B_r(p)) \geq C (\beta- \alpha)^{n-1}r^n.
\end{equation}
\end{thm}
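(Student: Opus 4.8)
The plan is to deduce the statement directly from Theorem~\ref{tetra-ball} and then to identify the mass of the integral current space $S(p,r)$ with the mass measure $\|T\|(B_r(p))$. The only difference between the two theorems is that here the integral tetrahedral property is assumed at \emph{every} radius $r \le r_0$, so the pointwise-in-$r$ conclusion of Theorem~\ref{tetra-ball} upgrades to an almost-everywhere statement on the interval $(0, \min\{r_0, R\})$.

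First I would fix $r \in (0, \min\{r_0, R\})$. Since $r \le r_0$, the assumption furnishes points $p_1,\dots,p_{n-1} \in \bar{X}$ realizing the $n$-dimensional $(C,\alpha,\beta)$-integral tetrahedral property at $p$ for radius $r$, and since $r < R$ the hypothesis $\bar{B}_R(p)\cap\set{\partial T}=0$ of Theorem~\ref{tetra-ball} is in force. That theorem therefore applies and gives, for almost every such $r$,
\[
\mass(S(p,r)) \ge \SF_{n-1}(p,r) \ge C(\beta-\alpha)^{n-1} r^n.
\]
The exceptional null set is precisely the set of radii at which $S(p,r)$ fails to be an integral current space (Lemma~3.1 in \cite{PorSor}); intersecting it with $(0,\min\{r_0,R\})$ keeps it null, so the estimate holds for almost every $r$ in that interval.

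It then remains to rewrite $\mass(S(p,r))$ as $\|T\|(B_r(p))$. By construction $S(p,r) = (\set(T\rstr B_r(p)),\, d,\, T\rstr B_r(p))$, so its mass is the total mass of the restricted current $T\rstr B_r(p)$. Using that the mass measure of a restriction satisfies $\|T\rstr B_r(p)\|(A) = \|T\|(A\cap B_r(p))$ for Borel sets $A$, together with the inclusion $B_r(p)\subset\set(S(p,r))$ from \eqref{ball-in-ball}, I would obtain
\[
\mass(S(p,r)) = \|T\rstr B_r(p)\|(\set(S(p,r))) = \|T\|(B_r(p)).
\]
Substituting this identity into the displayed inequality yields the claim.

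Since every ingredient is already available, I do not expect a substantive obstacle. The single point requiring care is the last identification: one must check that the mass of $S(p,r)$, computed as the total variation of its current structure over its own support $\set(S(p,r))$, equals the mass measure of the ambient current $T$ on the open ball $B_r(p)$, and this is exactly where the inclusions $B_r(p)\subset\set(S(p,r))\subset\bar{B}_r(p)$ of \eqref{ball-in-ball} are used.
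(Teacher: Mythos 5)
Your proposal is correct, and it rests on the same pillar as the paper's own proof, namely Theorem \ref{tetra-ball}, but the step that converts $\mass(S(p,r))$ into $\|T\|(B_r(p))$ is genuinely different. You invoke the Ambrosio--Kirchheim restriction identity $\|T\rstr A\| = \|T\|\rstr A$ for Borel sets $A$, which gives the exact equality $\mass(S(p,r)) = \|T\|(B_r(p))$, so the almost-everywhere conclusion is inherited verbatim from Theorem \ref{tetra-ball} with no further argument. The paper never asserts this equality: instead it fixes $r$, chooses $\delta_i \downarrow 0$ so that Theorem \ref{tetra-ball} applies at the radii $r+\delta_i/2$, and uses only the inclusions \eqref{ball-in-ball} together with monotonicity of the mass measure to obtain $\|T\|(B_{r+\delta_i}(p)) \geq \|T\|(\bar B_{r+\delta_i/2}(p)) \geq \mass(S(p,r+\delta_i/2)) \geq C(\beta-\alpha)^{n-1}(r+\delta_i/2)^n$, and then passes to the limit in $i$. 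Your route is shorter and sharper, at the cost of importing a standard fact from \cite{AK} that the paper does not quote; the paper's route uses only the set inclusions already stated in its background section, at the hidden cost that $\lim_i \|T\|(B_{r+\delta_i}(p))$ is really $\|T\|(\bar B_r(p))$, so its open-ball conclusion is likewise only recovered for almost every $r$ (spheres carry $\|T\|$-measure zero for all but countably many radii) --- a subtlety your argument avoids entirely. One small inaccuracy in your write-up, which does not affect correctness: the exceptional null set in Theorem \ref{tetra-ball} is not ``precisely'' the set of radii at which $S(p,r)$ fails to be an integral current space, since it also absorbs the almost-everywhere restrictions coming from the slicing argument behind Theorem \ref{dist-set-2}; all you need, and all you actually use, is that it is a null set.
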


\begin{proof}
By Theorem \ref{tetra-ball}, we can choose $\delta_i \downarrow 0$ such that
\begin{equation}\label{eq-MSdelta}
\mass(S(p,r+\delta_i/2))  \geq \SF_{n-1}(p,r+ \delta_i/2) \geq C (\beta- \alpha)^{n-1}(r+ \delta_i/2)^n.
\end{equation}
 Thus, from 
(\ref{ball-in-ball}) and (\ref{eq-MSdelta}), 
\begin{align}
||T||(B_{r+ \delta_i}(p)) \geq  & ||T||(\bar B_{r+ \delta_i/2}(p)) \geq \mass(S(p,r+\delta_i/2)) \\
 \geq   &   C (\beta- \alpha)^{n-1}(r+ \delta_i/2)^n.
\end{align}
Taking the limit as $i \to \infty$, we get the estimate
\begin{equation}
||T||(B_{r}(p))  \geq C (\beta- \alpha)^{n-1}r^n.
\end{equation}
\end{proof}


\subsection{Convergence Theorems}\label{ssec-convergence}

Applying the mass measure estimates for balls from the previous subsection we show that the Gromov-Hausdorff and intrinsic flat convergence theorems  proven by Portegies-Sormani for the $(C, \beta)$-(integral) tetrahedral property in  \cite{PorSor} 
also hold for the $(C, \alpha, \beta)$-(integral) tetrahedral property, Theorem  \ref{tetra-compactness11}  and Theorem \ref{tetra-compactness3}. We get as corollaries  Theorem \ref{tetra-compactness2} and Theorem \ref{tetra-compactness4}.

\begin{thm}[cf. Theorem 3.42 in \cite{PorSor}]\label{tetra-compactness11}  
Let  $C>0$, $0< \alpha <  \beta < 2$,  $r_0>0$, $V_0>0$ and $(X_i, d_i, T_i)$ be a sequence of $n$-dimensional integral current spaces 
that satisfy 
\[
\mass(T_i) \le V_0,\,\,\partial T_i=0.
\]
Suppose that $(X_i,d_i)$ are compact length metric spaces that satisfy the $n$-dimensional $(C,\alpha,\beta)$-(integral) tetrahedral
property for all radii $r \leq  r_0$.  Then a subsequence of the $(X_i,d_i)$ converges in Gromov-Hausdorff sense.  In particular, there exists a constant $D_0(C,\alpha,\beta, r_0, V_0) > 0$ for which 
$ \diam(X_i) \leq D_0(C,\alpha,\beta, r_0, V_0)$.
\end{thm}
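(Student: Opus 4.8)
The plan is to derive this Gromov--Hausdorff precompactness statement as a consequence of the volume (mass) estimate already established in Theorem~\ref{mass-ball}, combined with Gromov's compactness theorem. The strategy mirrors Portegies--Sormani's argument for the $(C,\beta)$-tetrahedral property, but now the local mass lower bound carries the factor $(\beta-\alpha)^{n-1}$ instead of $(2\beta)^{n-1}$. First I would recall that Gromov's precompactness theorem guarantees that a sequence of compact metric spaces has a Gromov--Hausdorff convergent subsequence provided one has a \emph{uniform} diameter bound together with a uniform upper bound on the number of $\eps$-balls needed to cover each space (equivalently, uniform total boundedness). The hypotheses $\mass(T_i)\le V_0$ and $\partial T_i=0$, together with the $(C,\alpha,\beta)$-integral tetrahedral property for all $r\le r_0$, will supply exactly these two ingredients.

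The key steps, in order, are as follows. First, since $\partial T_i=0$ we have $\set(\partial T_i)=\emptyset$, so the hypothesis $\bar B_R(p)\cap\set(\partial T)=\emptyset$ of Theorem~\ref{mass-ball} holds vacuously at every point $p\in X_i$ and for every $R$; hence for each $p$ and almost every $r\le r_0$ we obtain
\[
\|T_i\|(B_r(p)) \geq C(\beta-\alpha)^{n-1} r^n.
\]
Second, I would use this lower bound to produce a uniform diameter bound: fix $\eps=r_0$ (or any $r\le r_0$) and take a maximal $r$-separated set $\{x_1,\dots,x_N\}$ in $X_i$. The balls $B_{r/2}(x_j)$ are pairwise disjoint, so by the mass bound $N\cdot C(\beta-\alpha)^{n-1}(r/2)^n \le \sum_j\|T_i\|(B_{r/2}(x_j)) \le \mass(T_i)\le V_0$, giving a uniform upper bound $N\le N_0(C,\alpha,\beta,r_0,V_0)$ on the cardinality of any $r_0$-separated set. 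Since $X_i$ is a compact length space and any two points can be joined by a geodesic, a uniform bound on the number of points in an $r_0$-separated set translates into a uniform diameter bound: one walks along a minimizing geodesic between the two farthest points, extracts an $r_0$-separated subset of points along it, and concludes $\diam(X_i)\le 2r_0 N_0 =: D_0$. This establishes the ``in particular'' clause with the stated dependence $D_0(C,\alpha,\beta,r_0,V_0)$.

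Third, for precompactness I would establish uniform total boundedness by the same packing argument: for any $\eps\in(0,r_0)$, the disjointness of the balls $B_{\eps/2}(x_j)$ over a maximal $\eps$-separated set forces
\[
N(\eps) \leq \frac{V_0}{C(\beta-\alpha)^{n-1}(\eps/2)^n},
\]
a bound independent of $i$. Because a maximal $\eps$-separated set is automatically $\eps$-dense, this gives a uniform bound on the $\eps$-covering number of every $X_i$. Uniform diameter plus uniform covering numbers are precisely the hypotheses of Gromov's compactness theorem (see \cite{BurBurIva}), so a subsequence of $(X_i,d_i)$ converges in the Gromov--Hausdorff sense to a compact metric space. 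Finally, the limit is \emph{noncollapsed} because the lower bound $\|T_i\|(B_r(p))\ge C(\beta-\alpha)^{n-1}r^n$ passes to a positive lower bound on the limiting measure (via lower semicontinuity of mass under the convergence), ruling out dimension drop; I would phrase this as: every ball of radius $r\le r_0$ in the limit retains positive $n$-dimensional content.

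The main obstacle is the noncollapsing/limit-measure assertion rather than the precompactness itself, since the latter is a direct packing estimate. The delicate point is that the uniform local mass lower bound is only guaranteed for \emph{almost every} $r$, so to push the bound to the Gromov--Hausdorff limit I must either invoke the continuity of $r\mapsto\|T_i\|(\bar B_r(p))$ to fill in the exceptional radii (exactly as in the proof of Theorem~\ref{mass-ball}, via $\delta_i\downarrow 0$), or appeal directly to the stronger convergence machinery of Theorem~\ref{tetra-compactness3} which records the intrinsic flat limit as well. For the present statement, which only claims Gromov--Hausdorff convergence to a noncollapsed space, the packing argument together with lower semicontinuity of the mass measure under Gromov--Hausdorff convergence suffices, and I would keep the noncollapsing claim at the level of a uniform positive lower bound on ball-volumes inherited by the limit.
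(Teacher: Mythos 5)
Your proposal is correct and follows essentially the same route as the paper's own (much terser) proof: the ball-mass lower bound $\|T_i\|(B_r(p))\geq C(\beta-\alpha)^{n-1}r^n$ from Theorem~\ref{tetra-ball}/Theorem~\ref{mass-ball} (with $\partial T_i=0$ making the boundary hypothesis vacuous), a packing estimate of disjoint small balls against $\mass(T_i)\le V_0$, Gromov's compactness theorem for precompactness, and the length-space property to convert the packing bound at scale comparable to $r_0$ into the uniform diameter bound $D_0(C,\alpha,\beta,r_0,V_0)$; your explicit treatment of the almost-every-radius issue is a sound refinement of what the paper leaves implicit. The only divergence is your closing paragraph on noncollapsing of the limit, which is not part of this statement (that claim appears in the manifold version, Theorem~\ref{tetra-compactness2}, and is handled in the paper via the intrinsic flat machinery of Theorem~\ref{tetra-compactness3}), so it can simply be omitted.
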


\begin{proof}
Given $\varepsilon \in (0,r_0)$, apply Theorem~\ref{tetra-ball} to get $||T||(B_\varepsilon(p)) \geq C (\beta- \alpha)^{n-1}\varepsilon^n$. 
This bound together with $\mass(T_i) \le V_0$ allows us to uniformly 
bound the maximal number of disjoint balls of radius $\varepsilon$ contained in $X_i$.  
Hence, precompactness follows from Gromov's Compactness Theorem.  Since $(X_i,d_i)$ are length metric 
spaces, the uniform bound on the maximal number of disjoint balls of radius $r_0/2$  provides a uniform upper bound
on $\diam(X_i)$ that only depends on $C, \alpha, \beta, r_0$ and $V_0$.   
\end{proof}

Theorem \ref{tetra-compactness2} follows from Theorem \ref{tetra-compactness11}.

\begin{thm}[cf. Theorem 5.2 in \cite{PorSor}]\label{tetra-compactness3}
Let $r_0>0$, $0< \alpha <  \beta < 2$,  $C, V_0>0$ and $(X_i, d_i, T_i)$ be a sequence of $n$-dimensional integral current spaces 
with 
\[
\mass(T_i) \le V_0,\,\,\partial T_i=0. 
\]
Suppose that $(X_i,d_i)$ are compact length metric spaces that satisfy the $n$-dimensional $(C,\alpha,\beta)$-(integral) tetrahedral
property for all radii $r \leq  r_0$. Then $(X_i,d_i)$ has a Gromov-Hausdorff and intrinsic flat convergent subsequence
whose limits agree. 
\end{thm}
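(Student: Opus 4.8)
The plan is to derive Theorem \ref{tetra-compactness3} by combining the Gromov-Hausdorff precompactness already established in Theorem \ref{tetra-compactness11} with the intrinsic flat compactness supplied by Wenger's theorem, and then forcing the two limits to coincide via the sliced filling lower bound. First I would invoke Theorem \ref{tetra-compactness11} to extract a subsequence of $(X_i,d_i)$ converging in the Gromov-Hausdorff sense and, crucially, to obtain the uniform diameter bound $\diam(X_i)\leq D_0(C,\alpha,\beta,r_0,V_0)$. Together with the standing hypotheses $\mass(T_i)\leq V_0$ and $\partial T_i=0$ (so that $\mass(\partial T_i)=0\leq A$ trivially), this puts the sequence $(X_i,d_i,T_i)$ into the class covered by Wenger's intrinsic flat compactness theorem, yielding a further subsequence converging in the intrinsic flat sense to some integral current space.

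The heart of the argument is to rule out the two ways in which the intrinsic flat limit could fail to agree with the Gromov-Hausdorff limit: it could be the zero integral current space, or it could be a proper subset of the Gromov-Hausdorff limit. Both are excluded by a uniform, scale-invariant lower bound on the mass of balls. The key step is therefore to apply Theorem \ref{tetra-ball}, which under the $(C,\alpha,\beta)$-integral tetrahedral property gives
\[
\mass(S(p,r))\geq \SF_{n-1}(p,r)\geq C(\beta-\alpha)^{n-1}r^n
\]
for almost every $r\leq r_0$ and every $p$. This noncollapsing estimate shows that no mass can disappear in the intrinsic flat limit, so the limit is nonzero, and it also guarantees that every point of the Gromov-Hausdorff limit carries positive lower density for the limiting mass measure. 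At this point I would appeal directly to Theorem \ref{SF_k-compactness} of Portegies-Sormani (with $k=n-1$), whose hypotheses are exactly the uniform bounds $\mass(T_i)\leq V$, $\mass(\partial T_i)\leq A$, $\diam(X_i)\leq D$, together with $\SF_{n-1}(p,r)\geq Cr^n$ for $r\leq r_0$; absorbing the constant $C(\beta-\alpha)^{n-1}$ into a single positive constant, this theorem delivers a subsequence converging in both senses to the \emph{same} metric space, which is moreover $\mathcal H^n$ countably rectifiable.

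I expect the main obstacle to be purely bookkeeping rather than conceptual: one must verify that the sliced filling lower bound from Theorem \ref{tetra-ball}, which holds only for almost every $r$, is nonetheless enough to feed into Theorem \ref{SF_k-compactness}, and that the constant appearing there can be taken uniform in $i$ and independent of the basepoint $p$. Since the tetrahedral property is assumed at \emph{every} point $p$ and for \emph{all} radii $r\leq r_0$, and the bound $C(\beta-\alpha)^{n-1}r^n$ depends on neither $p$ nor $i$, this uniformity is automatic. The only subtlety is the almost-everywhere qualifier, but this is harmless because the sliced filling inequality and the resulting mass lower bounds pass to all radii in the limit by the same monotonicity argument used in the proof of Theorem \ref{mass-ball}. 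Consequently the proof reduces to checking hypotheses and citing the two compactness theorems, and Theorem \ref{tetra-compactness4} follows as the special case of Riemannian manifolds endowed with their canonical currents, where $\mass(S(p,r))=\vol(B_r(p))$.
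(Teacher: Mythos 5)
Your proposal is correct and follows essentially the same route as the paper: verify the hypotheses of Theorem \ref{SF_k-compactness} — boundary mass zero from $\partial T_i=0$, the uniform diameter bound from Theorem \ref{tetra-compactness11}, and the bound $\SF_{n-1}(p,r)\geq C(\beta-\alpha)^{n-1}r^n$ from Theorem \ref{tetra-ball} — and then apply that theorem with $k=n-1$. Your detour through Wenger's compactness theorem is redundant (Theorem \ref{SF_k-compactness} already yields both convergences with agreeing limits), and your remark on the almost-every-$r$ qualifier addresses a point the paper itself glosses over; neither changes the argument.
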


\begin{proof}
We just need to show that the hypotheses of Theorem~\ref{SF_k-compactness} are 
satisfied.  First, since we are considering integral current spaces $X_i$ with $\bdry T_i=0$, $\mass(\bdry T_i)=0$. 
Second, by Theorem~\ref{tetra-compactness11} there
exists a uniform upper bound on the diameter of the $X_i$.  Finally,  since for all $p \in X_i$ and $r \leq r_0$,  $(X_i,d_i)$ satisfies the $n$-dimensional $(C,\alpha,\beta)$-(integral) tetrahedral
property at $p \in X_i$ for radius $r$,  Theorem \ref{tetra-ball} implies that 
\[
\SF_{n-1}(p,r) \ge C(\beta - \alpha)^{n-1} r^n.
\]
Now we can apply Theorem~\ref{SF_k-compactness} and get the result. 
\end{proof}

Theorem \ref{tetra-compactness4} follows from Theorem \ref{tetra-compactness3}.



\end{document}